
\documentclass{amsart}
\usepackage{amsfonts}

\setcounter{MaxMatrixCols}{10}

\newtheorem{theorem}{Theorem}[section]
\newtheorem{lemma}[theorem]{Lemma}
\theoremstyle{definition}
\newtheorem{definition}[theorem]{Definition}

\newtheorem{proposition}[theorem]{Proposition}
\theoremstyle{remark}
\newtheorem{remark}[theorem]{Remark}

\numberwithin{equation}{section}

\begin{document}
\title[Entropy of Sobolev's classes]{Entropy of Sobolev's classes on Compact
Homogeneous Riemannian Manifolds}
\author{Alexander Kushpel}
\address{Department of Mathematics, University of Leicester, UK}
\email{ak412@le.ac.uk}
\thanks{This research has been supported by the EPSRC Grant EP/H020071/1}
\author{Jeremy Levesley}
\address{Department of Mathematics, University of Leicester, UK}
\email{jl1@le.ac.uk}
\subjclass[2010]{ Primary 41A46, 42B15}
\keywords{n-width, entropy, compact homogeneous manifold, volume}
\date{April 20th 2015}

\begin{abstract}
We develop a general method to calculate entropy numbers $%
e_{n}(W_{p}^{\gamma },L_{q})$ of standard Sobolev's classes $W_{p}^{\gamma }$
in $L_{q}$ on an arbitrary compact homogeneous Riemannian manifold $\mathbb{M%
}^{d}$. Our method is essentially based on a detailed study of geometric
characteristics of norms induced by subspaces of harmonics on $\mathbb{M}%
^{d} $. The method's possibilities are not confined to the statements proved
but can be applied in studying more general problems such as entropy of
multiplier operators. As an application, we establish sharp orders of $%
e_{n}(W_{p}^{\gamma },L_{q})$ and respective $n$-widths as $n\rightarrow
\infty $ for any $1<p,q<\infty $. In the case $p,q=1,\infty $ sharp in the
power scale estimates are presented.
\end{abstract}

\maketitle

\section{Introduction}

\label{9619.sec1}

Let $(\Omega ,\nu )$ be a measure space and $\{\xi _{k}\}_{k\in \mathbb{N}}$
be a sequence of orthonormal, functions on $\Omega $. Let $X$ be a Banach
space with the norm $\Vert \cdot \Vert _{X}$ and $\{\xi _{k}\}_{k\in \mathbb{%
N}}\subset X$. Clearly, $\Xi _{n}(X):=\mathrm{lin}\{\xi _{1},\cdots ,\xi
_{n}\}\subset X$, $\forall n\in \mathbb{N}$ is a sequence of closed
subspaces of $X$ with the norm induced by $X$. Consider the coordinate
isomorphism $J$ defined as
\[
\begin{array}{ccc}
J:\,\,\mathbb{R}^{n} & \longrightarrow & \Xi _{n}(X) \\
\alpha =(\alpha _{1},\cdots ,\alpha _{n}) & \longmapsto &
\sum_{k=1}^{n}\alpha _{k}\xi _{k}.%
\end{array}%
\end{equation*}%
Hence, the definition
\[
\Vert \alpha \Vert _{J^{-1}\Xi _{n}(X)}=\Vert J\alpha \Vert _{X}
\]%
induces the norm on $\mathbb{R}^{n}$ which appears to be useful in various
applications. Of course, not much can be said regarding such kind of norms
even in lower dimensions. To be able to apply methods of geometry of Banach
spaces to various open problems in different spaces of functions on $\Omega $
we will need to calculate an expectation of the function $\rho _{n}(\alpha
):=\Vert \alpha \Vert _{J^{-1}\Xi _{n}(X)}$ on the unit sphere $\mathbb{S}%
^{n-1}\subset \mathbb{R}^{n}$ with respect to the invariant probabilistic
measure $d\mu _{n}$, i.e., to find the L\'{e}vy mean
\[
M(\Vert \cdot \Vert _{J^{-1}\Xi _{n}(X)})\,=\,\int_{\mathbb{S}^{n-1}}\,\Vert
\alpha \Vert _{J^{-1}\Xi _{n}(X)}\,d\mu _{n}(\alpha ).
\]%
Observe that the sequence of L\'{e}vy means $M(\Vert \cdot \Vert _{J^{-1}\Xi
_{n}(X)})$ contain more information then the sequence of volumes $\mathrm{Vol%
}_{n}\left( B_{J^{-1}\Xi _{n}(X)}\right) $, $n\in \mathbb{N}$, where $%
B_{J^{-1}\Xi _{n}(X)}:=\{\alpha |\,\alpha \in \mathbb{R}^{n},\,\,\Vert
\alpha \Vert _{J^{-1}\Xi _{n}(X)}\leq 1\}$ is the unit ball induced by the
norm $\Vert \cdot \Vert _{J^{-1}\Xi _{n}(X)}$ and therefore is more useful
in various applications.

As a motivating example consider the case $\Omega =\mathbb{M}^{d}$, where $%
\mathbb{M}^{d}$ is a compact homogeneous Riemannian manifold, $\nu $ its
normalized volume element, $\{\xi _{k}\}_{k\in \mathbb{N}}$ is a sequence of
orthonormal harmonics on $\mathbb{M}^{d}$ and $X=L_{p}=L_{p}(\mathbb{M}%
^{d},\nu )$, $p\geq 2$. In general, the sequence $\{\xi _{k}\}_{k\in \mathbb{%
N}}$ is not uniformly bounded on $\mathbb{M}^{d}$. Hence, the method of
estimating of L\'{e}vy means developed in \cite{ku1} - \cite{klw} can not
give sharp order result. Various modifications of this method presented in
\cite{ku3} - \cite{ku5} give an extra $(\log n)^{1/2}$ factor even if $%
p<\infty $. Our general result concentrated in Lemma 3.2 gives sharp order
estimates for the L\'{e}vy means which correspond to the norm induced on $%
\mathbb{R}^{n}$ by the subspace $\oplus _{s=1}^{m}H_{k_{s}}\cap L_{p}$, $%
\mathrm{dim}\,\oplus _{s=1}^{m}H_{k_{s}}:=n$ with an arbitrary index set $%
(k_{1},\cdots ,k_{m})$, where $H_{k_{s}}$ are the eigenspaces of the
Laplace-Beltrami operator for $\mathbb{M}^{d}$ defined by (\ref{lb}). To
show the boundness of the respective L\'{e}vy means as $n\rightarrow \infty $
we impose a technical condition (\ref{df1}) which holds in particular for
any compact homogeneous Riemannian manifold because of the addition formula (%
\ref{addi}) and employ the equality
\[
\int_{\mathbb{R}^{n}}\,h(\alpha )\,d\gamma (\alpha )=\lim_{m\rightarrow
\infty }\,\,\int_{0}^{1}\,h\left( \frac{\delta _{1}^{m}(\theta )}{(2\pi
)^{1/2}},\cdots ,\frac{\delta _{n}^{m}(\theta )}{(2\pi )^{1/2}}\right)
d\theta ,
\]%
where $h:\mathbb{R}^{n}\rightarrow \mathbb{R}$ is a continuous function, $%
h(\alpha _{1},\cdots ,\alpha _{n})\,\mathrm{exp}\left(
-\sum_{k=1}^{n}|\alpha _{k}|\right) \,\rightarrow \,0$ uniformly when $%
\sum_{k=1}^{n}|\alpha _{k}|\,\rightarrow \infty $, $d\gamma (\alpha )=%
\mathrm{exp}\left( -\pi \sum_{k=1}^{n}\alpha _{k}^{2}\right) d\alpha $ is
the Gaussian measure on $\mathbb{R}^{n}$, $\delta _{k}^{m}(\theta
)=m^{-1/2}(r_{(k-1)m}(\theta )+\cdots +r_{km})$, $1\leq k\leq n$ and $%
r_{s}(\theta )=\mathrm{sign}\sin (2^{s}\pi \theta )$, $s\in \mathbb{N}\cup
\{0\}$, $\theta \in \lbrack 0,1]$ is the sequence of Rademacher functions
\cite{kwap}, \cite{kuto1}. To extend our estimates to the case $p=\infty $
we apply Lemma \ref{lemimbed} which gives a useful inequality between $1\leq
p,q\leq \infty $ norms of polynomials on $\mathbb{M}^{d}$ with an arbitrary
spectrum. It seems that the factor $(\log n)^{1/2}$ obtained in Lemma \ref%
{levymean} is essential because of the lower bound for the L\'{e}vy means
found in \cite{kashin} in the case of trigonometric system. This fact
explains a logarithmic slot in our estimates of entropy numbers presented in
Theorem \ref{entropy}. Section 3 deals with estimates of entropy numbers and
$n$-widths. Theorem \ref{lower1} establishes general lower bounds for
entropy numbers in terms of L\'{e}vy means and is of independent interest.
We derive lower bounds for the entropy numbers of Sobolev's classes (\ref%
{sob-ent}) using Theorem \ref{lower1} and estimates of L\'{e}vy means given
by Lemma \ref{levymean}. At this point we apply Lemma \ref{ratios} to get
the dependence between eigenvalues and dimensions of eigenspaces of the
Laplace-Beltrami operator. The proof of Lemma \ref{ratios} is based on
Weyl's formula (see \cite{xxx})
\begin{equation}
\lim_{a\rightarrow \infty }a^{-d/2}n(a)=(2\pi ^{1/2})^{-d}\Gamma \left( 1+%
\frac{d}{2}\right) V(\mathbb{M}^{d}),  \label{weyl11}
\end{equation}%
where $V(\mathbb{M}^{d})$ is the volume of $\mathbb{M}^{d}$ and $n(a)$ is
the number of eigenvalues (each counted with its multiplicity) smaller than $%
a$. To get upper bounds for entropy numbers contained in Theorem \ref%
{entropy} we apply estimates of L\'{e}vy means established in Lemma \ref%
{levymean} and make use of the Pajor-Tomczak-Jaegermann inequality \cite{pt}
which states in our notations that for any $\lambda \in (0,1)$ there exists
a subspace $X_{s}\subset J^{-1}\Xi _{n}(X)$, $\mathrm{dim}\,X_{s}=s>\lambda
n $ and a universal constant $C>0$ such that
\begin{equation}
|\alpha |\leq C\frac{M(\Vert \cdot \Vert _{J^{-1}\Xi _{n}(X)}^{o})}{%
(1-\lambda )^{1/2}}\Vert \alpha \Vert _{J^{-1}\Xi _{n}(X)},\,\,\,\forall
\alpha \in X_{s},  \label{pt1}
\end{equation}%
where $|\cdot |$ is the Euclidean norm on $\mathbb{R}^{n}$ and $\Vert \cdot
\Vert _{J^{-1}\Xi _{n}(X)}^{o}$ is the dual norm with respect to $\Vert
\cdot \Vert _{J^{-1}\Xi _{n}(X)}$. Remark that (\ref{pt1}) is essentially
based on a technical result due to Gluskin \cite{gluskin}. Hawever, for our
applications is sufficient to apply a less sharp result established by
Bourgain and Milman \cite{burm} which is based on averaging arguments and
isoperimetric inequality.

The paper ends with estimates of different $n$-widths and their applications
in calculation of entropy which extend previous results \cite{bklt111}, \cite%
{jfan}, \cite{ku5}.

In this article there are several universal constants which enter into the
estimates. These positive constants are mostly denoted by $C,C_{1},...$. We
will only distinguish between the different constants where confusion is
likely to arise, but we have not attempted to obtain good estimates for
them. For ease of notation we will write $a_{n} \ll b_{n}$ for two
sequences, if $a_{n} \leq Cb_{n}$, $\forall n \in \mathbb{N}$ and $a_{n}
\asymp b_{n}$, if $C_{1}b_{n} \leq a_{n} \leq C_{2}b_{n}$, $\forall n \in
\mathbb{N}$ and some constants $C$, $C_{1}$ and $C_{2}$. Also, we shall put $%
(a)_{+}:=\max\{a,0\}$.

Though the main purpose of this paper is to present new results, we have
tried to make the text selfcontained by presenting well known definitions
and elementary properties of entropy numbers and $n$-widths.

Let $X$ and $Y$ be Banach spaces with the closed unit balls $B_{X}$ and $%
B_{Y}$ respectively. Let $v:X\rightarrow Y$ be a compact operator. Then the $%
n$th entropy number $e_{n}(v)=e_{n}(v:X\rightarrow Y)$ is the infimum of all
positive $\epsilon $ such that there exist $y_{1},\cdots ,y_{2^{n-1}}$ in $Y$
such that
\[
v(B_{X})\subset \bigcup_{k=1}^{2^{n-1}}(y_{k}+\epsilon B_{Y}).
\]%
Similarly, for a compact set $A\subset Y$ we define the entropy number $%
e_{n}(A,Y)$ as the infimum of all positive $\epsilon $ such that there exist
$\{y_{k}\}_{k=1}^{2^{n-1}}\subset Y$ such that $A\subset
\bigcup_{k=1}^{2^{n-1}}(y_{k}+\epsilon B_{Y}).$ Suppose that $A$ is a
convex, compact, centrally symmetric subset of a Banach space $X$ with unit
ball $B_{X}$. The Kolmogorov $n$-width of $A$ in $X$ is defined by $%
d_{n}(A,X):=d_{n}(A,B_{X}):=\inf_{X_{n}}\,\sup_{f\in A}\,\inf_{g\in
X_{n}}\Vert f-g\Vert _{X},$ where $X_{n}$ runs over all subspaces of $X$ of
dimension $n$. The Gel'fand $n$-width of $A$ in $X$ is defined by $%
d^{n}(A,X):=d^{n}(A,B_{X}):=\inf_{L^{n}}\,\sup_{x\in L^{n}\cap A}\Vert
x\Vert _{X}$, where $L^{n}$ runs over all subspaces of $X$ of codimension $n$%
. The Bernstein $n$-width of $A$ in $X$ is defined by $%
b_{n}(A,X):=b_{n}(A,B_{X}):=\sup_{X_{n+1}}\,\sup \{\epsilon >0:\,\epsilon
B\cap X_{n+1}\subset A\},$ where $X_{n+1}$ is any $(n+1)$-dimensional
subspace of $X$. For a compact operator $v:X\rightarrow Y$ we define
Kolmogorov's numbers
\[
d_{n}(v)=d_{n}(v:X\rightarrow Y)=\inf_{L\subset Y,\,\mathrm{dim}\,L\leq
n}\,\,\sup_{x\in B_{X}}\,\inf_{y\in L}\Vert vx-y\Vert _{Y}
\]%
and Gelfand's numbers
\[
d^{n}(v)=d^{n}(v:X\rightarrow Y)=\inf \{\Vert v|L\Vert \,|L\subset X,\,%
\mathrm{codim}L\leq n\}.
\]

\begin{proposition}
\label{simple} This proposition records some simple properties of $n$-widths
and entropy numbers.

\begin{enumerate}
\item If $X \subset Y$, then $d_n(A,Y) \le d_n(A,X)$.

\item Let $n=i+j$ and $A=A_{1} + A_{2}$. Then $d_n(A,X) \le
d_i(A_{1},X)+d_j(A_{1},X)$.

\item Kolmogorov and Gel'fand $n$-widths are dual. Let $X$ and $Y$ be Banach
spaces, $v \in \mathcal{L}(X, Y)$. If $X$ is reflexive and $v(X)$ is dense
in $Y$, then $d_{n}(v) = d^{n}(v^{\ast})$ (see, e.g., \cite{lgm}, p.408).

\item Later we will wish to restrict estimation of entropy numbers over
infinite-dimensional sets to finite-dimensional sets. In order to do this
let $i$ be any linear isometry, $i:Y\rightarrow \tilde{Y}$ (here we will
think of $Y$ as finite dimensional and $i$ as the imbedding into the
infinite dimensional space). Then (\cite[Proposition 5.1]{pisier}) $%
2^{-1}e_{n}(v)\leq e_{n}(i\circ v)\leq e_{n}(v),\,\,\,\forall n\in \mathbb{N}%
.$
\end{enumerate}
\end{proposition}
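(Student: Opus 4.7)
All four items are standard book-keeping results about $n$-widths and entropy numbers; the plan is to dispatch each directly from the definitions, with the only non-trivial step being the lower bound in (d). Throughout I use $\epsilon$-near optimal subspaces in place of exact minimizers, since infima need not be attained.

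For (a), I would note that when $X\subset Y$ (as a continuous embedding with norm constant $1$), two effects cooperate in the same direction: any $n$-dimensional subspace of $X$ is automatically an $n$-dimensional subspace of $Y$, and $\|\cdot\|_Y\le \|\cdot\|_X$ on $X$. Hence the infimum in $d_n(A,Y)$ ranges over a larger collection of admissible subspaces while the approximation error is measured by a smaller norm, and the inequality $d_n(A,Y)\le d_n(A,X)$ is immediate. For (b), given $\epsilon>0$, I would pick $X_i^\epsilon\subset X$ of dimension $i$ and $X_j^\epsilon\subset X$ of dimension $j$ that witness $d_i(A_1,X)$ and $d_j(A_2,X)$ within $\epsilon$. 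Then $X_i^\epsilon+X_j^\epsilon$ has dimension at most $n=i+j$, and for every $a=a_1+a_2\in A$ the triangle inequality produces an approximation in $X_i^\epsilon+X_j^\epsilon$ of error at most $d_i(A_1,X)+d_j(A_2,X)+2\epsilon$. Taking suprema over $A$, infima over admissible subspaces, and sending $\epsilon\to 0$ finishes the argument. (There is evidently a typographical slip in the statement: the second summand should read $d_j(A_2,X)$.)

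For (c), the duality identity $d_n(v)=d^n(v^\ast)$ under reflexivity of $X$ and density of $v(X)$ is classical and I would simply invoke the cited reference \cite{lgm}, p.~408, without reproducing the argument.

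For (d), the right-hand inequality $e_n(i\circ v)\le e_n(v)$ is immediate from the definition: any covering $v(B_X)\subset\bigcup_{k=1}^{2^{n-1}}(y_k+\epsilon B_Y)$ is transported by the linear isometry $i$ to a covering $(i\circ v)(B_X)\subset\bigcup_{k=1}^{2^{n-1}}(i(y_k)+\epsilon B_{\tilde Y})$ of the same cardinality and radius. The lower bound is the only step requiring real care, because a covering of $(i\circ v)(B_X)$ by balls in $\tilde Y$ may have centers outside $i(Y)$. I would apply the standard replacement trick: discard indices $k$ for which $\tilde y_k+\epsilon B_{\tilde Y}$ does not meet $(i\circ v)(B_X)$, and for each remaining index choose a point $\tilde z_k\in (i\circ v)(B_X)\cap(\tilde y_k+\epsilon B_{\tilde Y})$; this point lies in $i(Y)$, and by the triangle inequality $(i\circ v)(B_X)\subset\bigcup_k (\tilde z_k+2\epsilon B_{\tilde Y})$. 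Pulling back through $i^{-1}$ (which is an isometry onto its image) yields a covering of $v(B_X)$ by $2^{n-1}$ balls of radius $2\epsilon$ in $Y$, hence $e_n(v)\le 2\,e_n(i\circ v)$. This doubling is the unique obstruction in the whole proposition; every other step is a direct unpacking of definitions.
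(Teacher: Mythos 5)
Your arguments are correct and are precisely the standard ones; the paper offers no proof of this proposition at all, merely citing \cite{lgm} (p.~408) for (c) and \cite[Proposition 5.1]{pisier} for (d), and your replacement-of-centers argument for the factor $2$ in (d) is exactly the argument behind the cited result of Pisier. You are also right that the second summand in (b) is a typographical slip and should read $d_j(A_2,X)$.
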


\section{Elements of Harmonic Analysis on Compact Riemannian Homogeneous
Manifolds}

\label{harmonic analysis}

\begin{definition}
Let $(\Omega ,\nu )$ be a measure space for some compact set $\Omega \in
\mathbb{R}^{s}$, $s\in \mathbb{N}$. Let $\Xi =\{\xi _{k}\}_{k\in \mathbb{N}}$
be a set of orthonormal functions in $L_{2}(\Omega ,\nu )$. Suppose that
there exists a sequence $\kappa =\{k_{j}\}_{j\in \mathbb{N}}$, $k_{1}=1$,
such that for any $j\in \mathbb{N}$ and some $C>0$
\begin{equation}
\sum_{k=k_{j}}^{k_{j+1}-1}|\xi _{k}(x)|^{2}\leq Cd_{j}  \label{df1}
\end{equation}%
a.e. on $\Omega $, where $d_{j}:=k_{j+1}-k_{j}$. Then we say that $(\Omega
,\nu ,\Xi ,\kappa )\in {\mathcal{K}}$.
\end{definition}

Consider the set of $p$-integrable functions on $(\Omega ,\nu )$, $%
L_{p}=L_{p}(\Omega ,\nu )$. It follows from (\ref{df1}) that the functions $%
\xi _{k}$ are a.e. bounded for every $n\in \mathbb{N}$. Hence, for an
arbitrary $\phi \in L_{p}$, $1\leq p\leq \infty $ it is possible to
construct the Fourier coefficients
\[
c_{k}(\phi )=\int_{\Omega }\phi \overline{\xi _{k}}d\nu ,\quad k\in {\mathbf{%
N}},
\]%
and consider the formal Fourier series
\[
\phi \sim \sum_{l\in \mathbb{N}}\sum_{k=k_{l}}^{k_{l+1}-1}c_{k}(\phi )\xi
_{k}.
\]%
Let $U_{p}:=\{\phi |\,\Vert \phi \Vert _{p}\leq 1\}$ be the unit ball in $%
L_{p}$, and $\Lambda =\{\lambda _{l}\}_{l\in \mathbb{N}}$ be a fixed
sequence of complex numbers. We shall say that the multiplier operator $%
\Lambda $ is of type $(\kappa ,p,q)$ with the norm $\Vert \Lambda \Vert
_{p,q}^{\kappa }:=\sup_{\phi \in U_{p}}\Vert \Lambda \phi \Vert _{q}$, , if
for any $\phi \in L_{p}$ there is such $f\in L_{q}$ that
\[
f\sim \sum_{l\in \mathbb{N}}\lambda _{l}\sum_{k=k_{l}}^{k_{l+1}-1}c_{k}(\phi
)\xi _{k}.
\]%
Let us present here several important examples of measure spaces \newline
$(\Omega ,\nu ,\Xi ,\kappa )\in {\mathcal{K}}$. Consider a compact,
connected, $d$-dimensional $C^{\infty }$ Riemannian manifold $\mathbb{M}^{d}$
with $C^{\infty }$ metric. Let $g$ its metric tensor, $\nu $ its normalized
volume element and $\Delta $ its Laplace-Beltrami operator. In local
coordinates $x_{l}$, $1\leq l\leq d$,
\begin{equation}
\Delta =-(\overline{g})^{-1/2}\sum_{k}\frac{\partial }{\partial x_{k}}\left(
\sum_{j}g^{jk}(\overline{g})^{1/2}\frac{\partial }{\partial x_{j}}\right) ,
\label{lb}
\end{equation}%
where $g_{jk}:=g(\partial /x_{j},\partial /x_{k})$, $\overline{g}:=|\mathrm{%
det}(g_{jk})|$, and $(g^{jk}):=(g_{jk})^{-1}$. It is well-known that $\Delta
$ is an elliptic, self adjoint, invariant under isometry, second order
operator. The eigenvalues $\theta _{k}$, $k\geq 0$, of $\Delta $ are
discrete, nonnegative and form an increasing sequence $0\leq \theta _{0}\leq
\theta _{1}\leq \cdots \leq \theta _{n}\leq \cdots $ with $+\infty $ the
only accumulation point. The corresponding eigenspaces $H_{k}$, $k\geq 0$
are finite-dimensional, $d_{k}:=\mathrm{dim}\,(H_{k})$, orthogonal and $%
L_{2}=L_{2}(\mathbb{M}^{d},\nu )=\oplus _{k=0}^{\infty }H_{k}$. Let us fix
an orthonormal basis $\{Y_{m}^{k}\}_{m=1}^{d_{k}}$ of $H_{k}$. For an
arbitrary $\phi \in L_{p}$, $1\leq p\leq \infty $ with the formal Fourier
series
\[
\phi \sim c_{0}+\sum_{k\in \mathbb{N}}\sum_{m=1}^{d_{k}}c_{k,m}(\phi
)Y_{m}^{k},\,\,\,c_{k,m}(\phi )=\int_{\mathbb{M}^{d}}\phi \overline{Y_{m}^{k}%
}d\nu ,
\]%
the $\gamma $-th fractional integral $I_{\gamma }\phi :=\phi _{\gamma }$, $%
\gamma >0$, is defined as
\[
\phi _{\gamma }\sim c+\sum_{k\in \mathbb{N}}\theta _{k}^{-\gamma
/2}\sum_{m=1}^{d_{k}}c_{k,m}(\phi )Y_{m}^{k},\,\,\,c\in \mathbb{R}.\,\,\,\
\]%
The function $D_{\gamma }\phi :=\phi ^{(\gamma )}\in L_{p}$, $1\leq p\leq
\infty $ is called the $\gamma $-th fractional derivative of $\phi $ if
\[
\phi ^{(\gamma )}\sim \sum_{k\in \mathbb{N}}\theta _{k}^{\gamma
/2}\sum_{m=1}^{d_{k}}c_{k,m}(\phi )Y_{m}^{k}.
\]%
The Sobolev classes $W_{p}^{\gamma }$ are defined as sets of functions with
formal Fourier expansions (\ref{sobolev}) where $\Vert \phi \Vert _{p}\leq 1$
and $\int_{\mathbb{M}^{d}}\phi d\nu =0$. In this article we assume $%
c_{0},c=0 $ to guarantee compactness of the set $W_{p}^{\gamma }$ in $L_{q}$.

We recall that a Riemannian manifold $\mathbb{M}^{d}$ is called homogeneous
if its group of isometries ${\mathcal{G}}$ acts transitively on it, i.e. for
every $x,y\in \mathbb{M}^{d}$, there is a $g\in {\mathcal{G}}$ such that $%
gx=y$. For a compact homogeneous Riemannian manifold $\mathbb{M}^{d}$ the
following addition formula is known \cite{gine}
\begin{equation}
\sum_{k=1}^{d_{k}}|Y_{m}^{k}(x)|^{2}=d_{k},\,\,\,\forall x\in \mathbb{M}^{d},
\label{addi}
\end{equation}%
where $\{Y_{m}^{k}\}_{m=1}^{d_{k}}$ is an arbitrary orthonormal basis of $%
H_{k}$, $k\geq 0$. Hence, any such manifold possesses the property ${%
\mathcal{K}}$, and these include real and complex Grassmannians, the $n$%
-torus, the Stiefel manifold, two point homogeneous spaces (spheres, the
real, complex and quaternionic projective spaces and the Cayley elliptic
plane), and the complex sphere.

\begin{lemma}
\label{ratios} Let $\mathbb{M}^{d}$ be a compact, connected, homogeneous
Riemannian manifold, $\{\theta_{k}\}_{k \in \mathbb{N} \cup \{0\}}$ be the
sequence of eigenvalues and $\{H_{k}\}_{k \in \mathbb{N} \cup \{0\}}$ be the
corresponding sequence of eigenspaces of the Laplace-Beltrami operator $%
\Delta$ on $\mathbb{M}^{d}$. Put $\mathcal{T}_{N} = \oplus_{k=0}^{N} H_{k}$
and $\tau_N=\mathrm{dim} \, \mathcal{T}_{N}$.
\begin{equation}  \label{quoteig}
\lim_{N \rightarrow \infty} \frac{\theta_{N+1}}{\theta_{N}} =1
\end{equation}
and
\begin{equation}  \label{quotpoly}
\lim_{N \rightarrow \infty} \frac{\tau_{N+1}}{\tau_N} =1.
\end{equation}
\end{lemma}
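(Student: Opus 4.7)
The plan is to extract both limits directly from Weyl's formula \eqref{weyl11} by exploiting that the spectral counting function is piecewise constant. Since the distinct eigenvalues form a strictly increasing sequence $\theta_0 < \theta_1 < \cdots$ with $\theta_N \to \infty$, the function $n(a)=\#\{k:\theta_k<a\}$ (counted with multiplicity) equals $\tau_N$ on the half-open interval $(\theta_N,\theta_{N+1}]$; indeed $n(\theta_{N+1}) = \sum_{k=0}^{N} d_k = \tau_N$.

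Fix $\epsilon > 0$ and write $C = (2\pi^{1/2})^{-d}\Gamma(1+d/2)V(\mathbb{M}^d)$. By \eqref{weyl11}, choose $A$ so that $|n(a)/a^{d/2} - C| < \epsilon$ whenever $a > A$. Once $\theta_N > A$, the bound $|\tau_N/a^{d/2} - C| < \epsilon$ holds uniformly in $a \in (\theta_N,\theta_{N+1}]$. Since the function $a \mapsto \tau_N/a^{d/2}$ is continuous and decreasing on this interval, passing to the one-sided limits at both endpoints yields
\[
\frac{\tau_N}{\theta_N^{d/2}} \longrightarrow C \quad \text{and} \quad \frac{\tau_N}{\theta_{N+1}^{d/2}} \longrightarrow C
\]
as $N \to \infty$. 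Dividing the first limit by the second gives $(\theta_{N+1}/\theta_N)^{d/2} \to 1$, which is \eqref{quoteig}. Shifting $N \to N+1$ in the first limit gives $\tau_{N+1}/\theta_{N+1}^{d/2} \to C$; dividing this by $\tau_N/\theta_{N+1}^{d/2} \to C$ yields $\tau_{N+1}/\tau_N \to 1$, which is \eqref{quotpoly}.

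The argument uses nothing beyond \eqref{weyl11} and the piecewise-constant nature of $n$. The only delicate point is that $n$ has jumps at each $\theta_N$ — specifically $n(\theta_N) = \tau_{N-1}$ while the right limit equals $\tau_N$ — so one cannot naively substitute $a = \theta_N$ into Weyl's formula; the estimate must be read off from the interior of $(\theta_N,\theta_{N+1}]$ and transported to the endpoint $\theta_N$ by monotonicity of $a \mapsto a^{-d/2}$. Once this bookkeeping is done, the result is a purely arithmetic consequence of the two Weyl limits above.
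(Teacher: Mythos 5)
Your proof is correct, and it takes a genuinely different route from the paper's. The paper substitutes $a=\theta_N$ directly into Weyl's formula (\ref{weyl11}), writes the relative increment of $\theta_N^{-d/2}\tau_N$ as the sum
\[
\frac{\theta_{N+1}^{-d/2}-\theta_N^{-d/2}}{\theta_N^{-d/2}}\;+\;\frac{\theta_{N+1}^{-d/2}}{\theta_N^{-d/2}}\cdot\frac{\mathrm{dim}\,H_{N+1}}{\tau_N}\;\longrightarrow\;0,
\]
and argues that since ``both quotients are positive'' each must tend to zero separately, extracting (\ref{quoteig}) from the first term and (\ref{quotpoly}) from the second. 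You instead use the piecewise constancy of $n$ on $(\theta_N,\theta_{N+1}]$ to obtain the two limits $\tau_N\theta_N^{-d/2}\to C$ and $\tau_N\theta_{N+1}^{-d/2}\to C$, and get both conclusions by dividing limits (legitimate since $C>0$). This is more than a stylistic difference: in the paper's decomposition the first quotient is in fact nonpositive, because $\theta_{N+1}\ge\theta_N$ forces $\theta_{N+1}^{-d/2}\le\theta_N^{-d/2}$, so the display is a sum of a term $\le 0$ and a term $\ge 0$ tending to zero, from which one cannot conclude that each term vanishes; your two-endpoint evaluation supplies exactly the missing information and closes that gap. You also handle the bookkeeping $n(\theta_{N+1})=\tau_N$ correctly under the stated convention that $n(a)$ counts eigenvalues strictly smaller than $a$, whereas the paper's identification $n(\theta_N)=\tau_N$ is off by one block (harmless asymptotically, but your care here is what makes the left-endpoint limit work). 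In short, the paper's approach is a single compact computation that needs repair, while yours is an elementary squeeze from the interior of the constancy interval that is complete as written.
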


\begin{proof}
Applying Weyl's formula (\ref{weyl11}) for $a=\theta_{N}$ we get
\begin{equation}  \label{weyl}
\lim_{N \rightarrow \infty} \theta_N^{-d/2} n(\theta_N) = (2
\pi^{1/2})^{-d}\Gamma\left(1 + \frac{d}{2}\right)V(\mathbb{M}^{d}),
\end{equation}
and it follows that
\[
\lim_{N \rightarrow \infty} \frac{\theta_{N+1}^{-d/2}
n(\theta_{N+1})-\theta_N^{-d/2} n(\theta_N)}{\theta_N^{-d/2} n(\theta_N)}
\rightarrow 0, \quad N \rightarrow \infty.
\]
Now, $n(\theta_N) = \tau_N$, so that
\begin{eqnarray}
\lefteqn{\frac{\theta_{N+1}^{-d/2} \tau_{N+1} -\theta_N^{-d/2} \tau_N}{%
\theta_N^{-d/2} \tau_N } }  \notag \\
& = & \frac{\theta_{N+1}^{-d/2} (\tau_N + \mathrm{dim} \, H_{N+1})
-\theta_N^{-d/2} \tau_{N}}{\theta_N^{-d/2} \tau_N }  \notag \\
& = & {\frac{\theta_{N+1}^{-d/2} -\theta_N^{-d/2} }{\theta_N^{-d/2} }} + {%
\frac{\theta_{N+1}^{-d/2} }{\theta_N^{-d/2} }} {\frac{\mathrm{dim} \,
H_{N+1} }{\tau_N }} \rightarrow 0, \quad N \rightarrow \infty.
\label{quotients}
\end{eqnarray}
Since both quotients in the last equation are positive we have
\[
{\frac{\theta_{N+1}^{-d/2} -\theta_N^{-d/2} }{\theta_N^{-d/2} }} \rightarrow
0, \quad N \rightarrow \infty,
\]
which gives us (\ref{quoteig}). Equation (\ref{quotpoly}) follows since
\[
\lim_{N \rightarrow \infty} {\frac{ \tau_{N+1} }{\tau_N }} = \lim_{N
\rightarrow \infty} {\frac{ \tau_N + \mathrm{dim} \, H_{N+1} }{\tau_N }} = 1,
\]
using the second quotient in (\ref{quotients}), and (\ref{quoteig}).
\end{proof}

\section{Estimates of Entropy and $n$-Widths}

\label{entropy widths}

In this section we give several estimates of entropy and $n$-widths which
are order sharp in many important cases. Fix a measure space $(\Omega, \nu)$%
, an orthonormal system $\Xi$ and a sequence $\{k_{j}\}_{j \in \mathbb{N}}$
such that $(\Omega, \nu, \Xi, \kappa) \in {\mathcal{K}}$. Let
\[
\Xi^{j} := \mathrm{span} \, \{\xi_{k}\}_{k=k_{j}}^{k_{j+1}-1},\,\,\,
\Omega_{m} := \{j_{1}, \cdots , j_{m}\},\,\,\,\, \Xi(\Omega_{m}) := \mathrm{%
span} \, \{\Xi^{j_{s}}\}_{s=1}^{m}.
\]
Put $l_0:=0$, $l_k := \sum_{s=1}^{k} d_{j_{s}}$, $k=1,\cdots,m$, and $n :=
l_m = \mathrm{dim} \, (\Xi(\Omega_{m}))$.

Unfortunately, we need to introduce a re-enumeration of the functions $\xi_k$%
, since we are selecting separated blocks of them. Let us write
\[
(\Xi(\Omega_{m}))= \mathrm{span} \, \{ \eta_i : i=1,\cdots,n \},
\]
with the $\eta_i$ organized so that $\Xi^{j_s} = \mathrm{span} \, \{
\eta_{i} : l_{s-1}+1 \le i \le l_s \}$. Consider the coordinate isomorphism
\[
J:\,\mathbb{R}^{n} \rightarrow \Xi(\Omega_{m})
\]
that assigns to $\alpha =(\alpha_{1} \cdots , \alpha_{n}) \in \mathbb{R}^{n}$
the function $J\alpha = \xi^{\alpha} = \sum_{l=1}^{n} \alpha_{l} \eta_l \in
\Xi(\Omega_{m})$. Let $X$ and $Y$ be given Banach spaces such that $%
\Xi(\Omega_{m}) \subset X$ and $\Xi(\Omega_{m}) \subset Y$ for any $%
\Omega_{m} \subset \mathbb{N}$. Put $X_{n} = \Xi(\Omega_{m}) \cap X$ and $%
Y_{n} = \Xi(\Omega_{m}) \cap Y$. Let $\lambda_i \in \mathbb{R}$, $%
i=1,\cdots,m$, and
\[
\Lambda_{n} = \mathrm{diag} \{\lambda_{1} I_{d_{j_1}}, \cdots, \lambda_{m}
I_{d_{j_m}}\},
\]
where $I_s$ is the identity matrix of dimension $s$. Now, if $\Lambda_n$ is
invertible then $J \Lambda_n J^{-1}:\,\Xi(\Omega_{m}) \rightarrow
\Xi(\Omega_{m})$ is an invertible operator which essentially multiplies each
block $\Xi^{j_s}$ by $\lambda_s$, $s=1,\cdots,m$. Since it should not cause
any confusion we will refer to this operator also as $\Lambda_n$.

In what follows, a $*$ will be used to denote norms and balls in Euclidean
space, and lack of a $*$ will indicate the same quantities in function
spaces. Let us define the norms
\[
\|\alpha\|^*_{X_{n}} = \|\xi^\alpha\|_{X_{n}} : = \| \xi^{\alpha}\|_{X}.
\]
Put $B^{*}_{X_{n}}:=\{\alpha \in \mathbb{R}^{n}, \; \|\alpha\|^*_{X_{n}}
\leq 1\}, $ and $B_{X_{n}} := J B^*_{X_{n}}$.

\begin{lemma}
\label{lemimbed} For any $\Omega_{m}$ and any $\xi \in \Xi(\Omega_{m})$, $m
\in \mathbb{N}$ we have
\[
\|\xi\|_{p} \leq Cn^{(1/p-1/q)_{+}} \|\xi\|_{q},\,\,\,
\]
where $1 \leq p,q \leq \infty$ and $n := \mathrm{dim}\, \Xi(\Omega_{m})$.
\end{lemma}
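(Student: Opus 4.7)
The claim splits into two ranges. When $p \le q$, Hölder's inequality on the probability space $(\Omega,\nu)$ gives $\|\xi\|_p \le \|\xi\|_q$ with no dimensional factor, so the claim is immediate; I therefore focus on the genuine Nikolskii-type direction (reading the stated exponent in its usual form $n^{1/q-1/p}$ for $p \ge q$), and I anchor the whole argument on the $L_\infty$--$L_2$ estimate $\|\xi\|_\infty \le Cn^{1/2}\|\xi\|_2$.

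For that anchor I would decompose $\xi = \sum_{s=1}^m \xi_s$ along the given block structure, with $\xi_s = \sum_{k=k_{j_s}}^{k_{j_s+1}-1} c_k \xi_k \in \Xi^{j_s}$. Cauchy--Schwarz inside a single block combined with hypothesis (\ref{df1}) yields
\[
|\xi_s(x)|^2 \le \Bigl(\sum_k |c_k|^2\Bigr)\Bigl(\sum_k |\xi_k(x)|^2\Bigr) \le Cd_{j_s}\|\xi_s\|_2^2.
\]
A second Cauchy--Schwarz over $s=1,\dots,m$, together with block orthogonality $\|\xi\|_2^2 = \sum_s \|\xi_s\|_2^2$ and $\sum_s d_{j_s} = n$, then delivers $|\xi(x)| \le \sum_s |\xi_s(x)| \le C^{1/2} n^{1/2}\|\xi\|_2$ uniformly in $x$.

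From the anchor I would bootstrap by log-convexity of $L_r$-norms. If $q \le 2$, inserting $\|\xi\|_2 \le \|\xi\|_\infty^{1-q/2}\|\xi\|_q^{q/2}$ into the anchor and solving for $\|\xi\|_\infty$ gives $\|\xi\|_\infty \le Cn^{1/q}\|\xi\|_q$; if $q \ge 2$, the embedding $\|\xi\|_2 \le \|\xi\|_q$ already yields $\|\xi\|_\infty \le Cn^{1/2}\|\xi\|_q$. A final log-convexity step $\|\xi\|_p \le \|\xi\|_\infty^{1-q/p}\|\xi\|_q^{q/p}$, into which I substitute the corresponding $L_\infty$--$L_q$ bound, closes the argument in the finite-$p$ range.

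The only essential use of (\ref{df1}) is at the anchor step, so that is where the main work sits; the rest is routine interpolation. The point I expect to be delicate is the regime $q > 2$, where the direct bootstrap produces exponent $(1/2)(1-q/p)$ rather than the sharper $1/q-1/p$. To recover the full Nikolskii factor in that range I would sharpen the anchor through duality with the reproducing kernel $K(x,y) = \sum_{i=1}^n \eta_i(x)\overline{\eta_i(y)}$ of $\Xi(\Omega_m)$, using $|K(x,y)| \le Cn$ and $\|K(x,\cdot)\|_2^2 = K(x,x) \le Cn$ --- both consequences of (\ref{df1}) --- and then interpolate on $K$ before applying Hölder to $\xi(x) = \langle \xi, K(x,\cdot)\rangle$.
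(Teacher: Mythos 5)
You are right that the exponent in the statement should be read as $n^{(1/q-1/p)_+}$ (this is what the paper's own proof and its later use in (\ref{red}) actually deliver), and your anchor step is correct: the block-wise Cauchy--Schwarz argument from (\ref{df1}) giving $\|\xi\|_\infty \le Cn^{1/2}\|\xi\|_2$ is exactly the content of the paper's bound $\|K_n(x,\cdot)\|_2 \le Cn^{1/2}$ on the reproducing kernel. Your bootstrap then correctly yields the sharp factor $n^{1/q-1/p}$ whenever $q\le 2\le$ (or $=$) nothing in particular --- more precisely, whenever the lower exponent $q$ satisfies $q\le 2$, which happens to cover every instance the paper actually uses.

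The genuine gap is the regime $2<q\le p$, and your proposed repair does not close it. To get $\|\xi\|_p\le Cn^{1/q-1/p}\|\xi\|_q$ there you would need $\|\xi\|_\infty\le Cn^{1/q}\|\xi\|_q$, i.e.\ $\|K_n(x,\cdot)\|_{q'}\le Cn^{1/q}$ with $q'=q/(q-1)<2$. But interpolating the kernel between the available bounds only works downward from $L_2$ and $L_\infty$: for $q'\ge 2$ one gets $\|K_n(x,\cdot)\|_{q'}^{q'}\le \|K_n\|_\infty^{q'-2}\|K_n(x,\cdot)\|_2^2\le Cn^{q'-1}$, which is the sharp estimate, whereas for $q'<2$ log-convexity of norms gives nothing better than $\|K_n(x,\cdot)\|_{q'}\le\|K_n(x,\cdot)\|_2\le Cn^{1/2}$, reproducing the lossy exponent $n^{1/2}$ you already had. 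The information $(\ref{df1})$ simply does not control any norm of $K_n(x,\cdot)$ below $L_2$ beyond the trivial $n^{1/2}$, so no amount of H\"older/log-convexity on the function or on the kernel recovers $n^{1/q}$ for $q>2$. What is missing is \emph{operator} interpolation: the paper first derives $\|\xi\|_\infty\le \|K_n\|_\infty\|\xi\|_1\le Cn\|\xi\|_1$, i.e.\ the embedding $I:L_1\cap\Xi(\Omega_m)\to L_\infty\cap\Xi(\Omega_m)$ has norm $\le Cn$, and then applies Riesz--Thorin against $\|I\|_{L_r\to L_r}=1$; with $\theta=1/q-1/p$ this gives $(Cn)^{1/q-1/p}$ uniformly in $1\le q\le p\le\infty$ in one stroke. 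Replacing your final log-convexity steps by this Riesz--Thorin argument (your anchor already supplies $\|K_n\|_\infty\le Cn$) fixes the proof.
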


\begin{proof}
Let
\[
K_{n}(x,y):=\sum_{i=1}^{n}\eta _{i}(x)\overline{\eta _{i}(y)}.
\]%
be the reproducing kernel for $\Xi (\Omega _{m})$. Clearly,
\[
K_{n}(x,y)=\int_{\mathbb{M}^{d}}K_{n}(x,z)K_{n}(z,y)d\nu (z),
\]%
and $K_{n}(x,y)=\overline{K_{n}(y,x)}$. Hence, using the Cauchy-Schwarz
inequality,
\[
\Vert K_{n}(\cdot ,\cdot )\Vert _{\infty }\leq \Vert K_{n}(y,\cdot )\Vert
_{2}\Vert K_{n}(x,\cdot )\Vert _{2}
\]%
for any $x,y\in \mathbb{M}^{d}$. Since $(\Omega ,\nu ,\Xi ,\kappa )\in
\mathcal{K}$, from (\ref{df1}), we have $\Vert K_{n}(x,\cdot )\Vert _{2}\leq
Cn^{1/2}$. Therefore,
\begin{equation}  \label{infty}
\Vert K_{n}(\cdot ,\cdot )\Vert _{\infty }\leq Cn.
\end{equation}%
Let $\xi \in \Xi (\Omega _{m})$. Then applying H\"{o}lder inequality and (%
\ref{infty}) we get
\[
\Vert \xi \Vert _{\infty }\leq \Vert K_{n}(\cdot ,\cdot )\Vert _{\infty
}\Vert \xi \Vert _{1},
\]%
and hence
\[
\Vert I\Vert _{L_{1}(\mathbb{M}^{d})\cap \Xi (\Omega _{m})\rightarrow
L_{\infty }(\mathbb{M}^{d})\cap \Xi (\Omega _{m})}\leq Cn,
\]%
where $I$ is the embedding operator. Trivially, $\Vert I\Vert _{L_{p}(%
\mathbb{M}^{d})\cap \Xi (\Omega _{m})\rightarrow L_{p}(\mathbb{M}^{d})\cap
\Xi (\Omega _{m})}=1$, $1\leq p\leq \infty $. Hence, using the Riesz-Thorin
interpolation Theorem and embedding arguments, for any $\xi \in \Xi (\Omega
_{m})$, we obtain
\[
\Vert \xi \Vert _{p}\leq Cn^{(1/p-1/q)_{+}}\Vert \xi \Vert _{q},\,\,\,1\leq
p,q\leq \infty .
\]
\end{proof}

Let us fix a norm $\Vert \cdot \Vert $ on $\mathbb{R}^{n}$ and let $E=(%
\mathbb{R}^{n},\Vert \cdot \Vert )$ be a Banach space with the unit ball $%
B_{E}$. The dual space $E^{o}=(\mathbb{R}^{n},\Vert \cdot \Vert ^{o})$ is
endowed with the norm $\Vert \xi \Vert ^{o}=\sup_{\sigma \in B_{E}}|\langle
\xi ,\sigma \rangle |$ and has the unit ball $B_{E^{o}}$. In these notations
the L\'{e}vy mean $M_{B_{E}}$ is
\[
M_{B_{E}}=\int_{\mathbb{S}^{n-1}}\Vert \xi \Vert d\mu ,
\]%
where $d\mu $ denotes the normalized invariant measure on $\mathbb{S}^{n-1}$%
, the unit sphere in $\mathbb{R}^{n}$. We are interested in the case where $%
\Vert \cdot \Vert =\Vert \alpha \Vert _{I_{n},L_{p}}^{\ast }$. In the case $%
\Omega _{m}=\{1,\cdots ,m\}$ the estimates of the associated L\'{e}vy means
were obtained in \cite{kuto1}. Using Lemma~\ref{lemimbed} we can
straightforwardly generalize this result to be valid for an arbitrary index
set $\Omega _{m}$. This we state as

\begin{lemma}
\label{levymean} Let $X_{n}=L_{p}\cap \Xi (\Omega _{m})$, and $n:=\mathrm{dim%
}\,\Xi (\Omega _{m})$. Then the L\'{e}vy mean

$\ \ M_{B_{X_{n}}^{\ast }}\leq C\left\{
\begin{array}{cc}
p^{1/2}, & p<\infty , \\
(\log n)^{1/2}, & p=\infty .%
\end{array}%
\right. $
\end{lemma}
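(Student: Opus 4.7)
The plan is to prove the case $p<\infty$ by direct averaging on the sphere, then to reduce the case $p=\infty$ to it using Lemma \ref{lemimbed}. Starting from $M_{B_{X_{n}}^{\ast}} = \int_{\mathbb{S}^{n-1}} \|\xi^{\alpha}\|_{p}\, d\mu(\alpha)$, I would first apply Jensen's inequality (valid since $p\geq 1$) followed by Fubini to obtain
\[
M_{B_{X_{n}}^{\ast}}^{p} \leq \int_{\mathbb{M}^{d}} \int_{\mathbb{S}^{n-1}} \Big| \sum_{i=1}^{n} \alpha_{i}\eta_{i}(x) \Big|^{p} d\mu(\alpha)\, d\nu(x).
\]
For each fixed $x$ the linear functional $\alpha\mapsto\sum_{i}\alpha_{i}\eta_{i}(x)$ is rotationally covariant on $\mathbb{S}^{n-1}$ with $L_2$ norm $\sigma(x):=(\sum_{i=1}^{n}|\eta_{i}(x)|^{2})^{1/2}$, hence the inner integral equals $\sigma(x)^{p}\int_{\mathbb{S}^{n-1}}|\alpha_{1}|^{p}d\mu$. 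The classical moment estimate $\int_{\mathbb{S}^{n-1}}|\alpha_{1}|^{p}d\mu \leq (Cp/n)^{p/2}$ supplies the correct $p^{1/2}$ scaling — precisely what the Gaussian/Rademacher representation advertised in the introduction delivers through Khintchine's inequality.

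The next ingredient is the pointwise bound on $\sigma(x)$ obtained by summing the defining inequality (\ref{df1}) across the blocks composing $\Omega_{m}$. Since $\{\eta_{i}\}$ is merely a re-enumeration of the $\{\xi_{k}\}$ inside those blocks,
\[
\sigma(x)^{2} = \sum_{s=1}^{m}\sum_{k=k_{j_{s}}}^{k_{j_{s}+1}-1}|\xi_{k}(x)|^{2} \leq C\sum_{s=1}^{m} d_{j_{s}} = Cn \quad \text{a.e. on } \mathbb{M}^{d}.
\]
Inserting this together with the moment bound into the previous display and using that $\nu$ is a probability measure yields $M_{B_{X_{n}}^{\ast}}^{p} \leq (Cn)^{p/2}(Cp/n)^{p/2} = (C'p)^{p/2}$, i.e.\ $M_{B_{X_{n}}^{\ast}} \leq C''\,p^{1/2}$.

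For $p=\infty$, Lemma \ref{lemimbed} applied pointwise in $\alpha$ gives $\|\xi^{\alpha}\|_{\infty} \leq Cn^{1/r}\|\xi^{\alpha}\|_{r}$ for any $1\leq r<\infty$. Integrating over $\mathbb{S}^{n-1}$ and invoking the bound already proved,
\[
M_{B_{X_{n},\infty}^{\ast}} \leq Cn^{1/r} M_{B_{X_{n},r}^{\ast}} \leq C' n^{1/r}\,r^{1/2}.
\]
Optimizing with $r=\log n$ forces $n^{1/r}=e$, leaving $C''(\log n)^{1/2}$, as claimed.

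The main delicate step is the moment estimate $\int_{\mathbb{S}^{n-1}}|\alpha_{1}|^{p}d\mu \leq (Cp/n)^{p/2}$ with the correct $p^{1/2}$ dependence; this is classical but is the only place where real input is needed, and it is cleanest through the Rademacher formulation singled out in the introduction, where Khintchine's inequality supplies the constant. Everything else is bookkeeping. I expect no substantial obstacle: the block-wise character of (\ref{df1}) is exactly what makes the passage from the consecutive case $\Omega_{m}=\{1,\ldots,m\}$ treated in \cite{kuto1} to an arbitrary index set $\Omega_{m}$ essentially free, which is the small generalization announced just before the statement.
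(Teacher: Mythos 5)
Your argument is correct and is essentially the argument the paper relies on: the paper gives no proof of Lemma~\ref{levymean}, citing \cite{kuto1} for the consecutive-block case and Lemma~\ref{lemimbed} for the passage to $p=\infty$, and your three ingredients --- the pointwise bound $\sum_{i=1}^{n}|\eta_{i}(x)|^{2}\leq Cn$ coming from (\ref{df1}), the $p^{1/2}$-growth of the $p$-th moment of a one-dimensional spherical marginal (equivalently the Gaussian/Rademacher--Khintchine route sketched in the introduction), and the Nikolskii-type inequality with $r=\log n$ --- are exactly those inputs. The only point to tidy is that the identity $\int_{\mathbb{S}^{n-1}}\bigl|\sum_{i}\alpha_{i}\eta_{i}(x)\bigr|^{p}d\mu(\alpha)=\sigma(x)^{p}\int_{\mathbb{S}^{n-1}}|\alpha_{1}|^{p}d\mu(\alpha)$ is exact only for real-valued $\eta_{i}$; for complex harmonics split into real and imaginary parts, which costs a harmless factor $2^{p/2}$ inside the $p$-th power and hence only a constant in the final bound.
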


We can now give lower bounds for entropy in terms of L\'{e}vy means. In the
following the reader should be identifying the spaces $X$ and $Y$ with $%
L_{p} $ and $L_{q}$ respectively for some $1\leq p,q\leq \infty $. However,
we wished to state the result in greater generality, and then apply the
previous result to extract particular results in these cases.

\begin{theorem}
\label{lower1} Viewing $\Xi(\Omega_{m})$ as a subspace $X_n \subset X$ and $%
Y_n \subset Y$, we have
\[
e_{k}(\Lambda U_{X}, Y) \geq 2^{-1-k/n} \frac{|\mathrm{det}\,
\Lambda_{n}|^{1/n}}{M_{B^{*}_{X_{n}}} M_{(B^{*}_{Y_{n}})^{o}}},\,\,
\]
where $k,n \in \mathbb{N}$ are arbitrary.
\end{theorem}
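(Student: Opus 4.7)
The plan is to establish the lower bound by a volume-comparison argument carried out inside the finite-dimensional subspace $\Xi(\Omega_m)$. Since the multiplier $\Lambda$ is supported on the blocks indexed by $\Omega_m$, one has $\Lambda U_X \subset \Xi(\Omega_m)$, and in the coordinates supplied by $J$ this set is precisely the image $\Lambda_n B^*_{X_n}$ of $B^*_{X_n}$ under the diagonal matrix $\Lambda_n$. The strategy is first to translate a hypothetical covering of $\Lambda U_X$ by $2^{k-1}$ balls of radius $\epsilon$ in $Y$ into a Lebesgue volume inequality in $\mathbb{R}^n$, and then to exploit the elementary relationships between the Lebesgue volume and the L\'evy mean of a symmetric convex body.

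For the covering reduction, suppose $\Lambda U_X \subset \bigcup_{j=1}^{2^{k-1}}(y_j+\epsilon B_Y)$. I would discard the empty intersections and replace each remaining center $y_j$ by some $y'_j \in (y_j + \epsilon B_Y)\cap \Lambda U_X \subset Y_n$; the triangle inequality then yields a covering $\Lambda U_X \subset \bigcup_j(y'_j+2\epsilon B_Y)$ whose centers lie in $Y_n$, and intersecting each enlarged ball with $Y_n$ turns it into a translate of $2\epsilon B_{Y_n}$. Pulling everything back through $J^{-1}$ and comparing Lebesgue volumes in $\mathbb{R}^n$ gives
$$|\mathrm{det}\,\Lambda_n|\,\mathrm{Vol}_n(B^*_{X_n}) \leq 2^{k-1}(2\epsilon)^n\,\mathrm{Vol}_n(B^*_{Y_n}),$$
which rearranges to $\epsilon \geq 2^{-1-(k-1)/n}|\mathrm{det}\,\Lambda_n|^{1/n}(\mathrm{Vol}_n(B^*_{X_n})/\mathrm{Vol}_n(B^*_{Y_n}))^{1/n}$, and this majorizes the right-hand side of the claim once the volume ratio is replaced by the L\'evy-mean ratio.

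The conversion from volumes to L\'evy means is handled by two independent inequalities against the Euclidean ball $B_2^n$. For the denominator I would invoke Urysohn's inequality,
$$\left(\mathrm{Vol}_n(B^*_{Y_n})/\mathrm{Vol}_n(B_2^n)\right)^{1/n} \leq M_{(B^*_{Y_n})^o},$$
since $M_{(B^*_{Y_n})^o}=\int_{\mathbb{S}^{n-1}}\sup_{\sigma \in B^*_{Y_n}}|\langle \xi,\sigma\rangle|\,d\mu(\xi)$ is, up to the normalization absorbed into $\mathrm{Vol}_n(B_2^n)$, the mean width of $B^*_{Y_n}$. For the numerator, integration in polar coordinates yields
$$\mathrm{Vol}_n(B^*_{X_n})/\mathrm{Vol}_n(B_2^n) = \int_{\mathbb{S}^{n-1}}\bigl(\|\alpha\|^*_{X_n}\bigr)^{-n}\,d\mu(\alpha),$$
and a single application of Jensen's inequality to the convex function $t \mapsto t^{-n}$ on $(0,\infty)$ gives $\left(\mathrm{Vol}_n(B^*_{X_n})/\mathrm{Vol}_n(B_2^n)\right)^{1/n} \geq 1/M_{B^*_{X_n}}$. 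Dividing the two displays and substituting into the volume inequality above produces the asserted bound.

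The only non-routine ingredient is the \emph{reverse}-Urysohn estimate on $B^*_{X_n}$: Urysohn itself supplies only one direction (upper bound on the volume of a body in terms of its mean width), whereas I need a lower bound on $\mathrm{Vol}_n(B^*_{X_n})$ in terms of the L\'evy mean of $\|\cdot\|^*_{X_n}$ itself, not of its dual. I expect this to be the main conceptual step, though it reduces cleanly to Jensen applied to $t\mapsto t^{-n}$ on the sphere. Everything else is standard covering and volume bookkeeping for entropy numbers.
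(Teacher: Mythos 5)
Your proof is correct and follows essentially the same route as the paper: reduce to the finite-dimensional set $\Lambda_n B^{*}_{X_n}$ (the paper invokes Proposition \ref{simple}(d), you recenter the covering by hand, both costing the factor $2^{-1}$), compare Lebesgue volumes, and then control the volume ratio via Urysohn's inequality for $B^{*}_{Y_n}$ together with Jensen's inequality applied to the polar-coordinate formula for $\mathrm{Vol}_n(B^{*}_{X_n})$. One minor correction: $\Lambda U_X$ is in general \emph{not} contained in $\Xi(\Omega_m)$, since $\Lambda$ acts on all spectral blocks; what you actually need (and what is true) is only the inclusion $\Lambda_n B^{*}_{X_n}\subseteq \Lambda U_X$, so the recentered covering should have its centers chosen in $\Lambda_n B^{*}_{X_n}\subset Y_n$ rather than in $\Lambda U_X$.
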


\begin{proof}
First, we use Proposition \ref{simple} (d) to obtain the estimate
\begin{equation}  \label{iii}
e_{k}(\Lambda U_{X}, Y) \geq 2^{-1} e_{k}(\Lambda U_{X} \cap \Xi_{n}, Y \cap
\Xi_{n}) = 2^{-1} e_k (\Lambda_{n} ( B^{*}_{X_{n}}) , B^{*}_{Y_{n}} ),
\end{equation}
using the appropriate norms in $X_n$ and $Y_n$. Let $\vartheta_{1}, \cdots ,
\vartheta_{N(\epsilon)}$ be a minimal $\epsilon$-net for $\Lambda_{n} (
B^{*}_{ X_{n}}) $ in $\left(B^{*}_{Y_{n}}, \,\mathbb{R}^{n}\right)$. Then,
\[
\Lambda_{n} B^{*}_{X_{n}} \subset \bigcup_{k=1}^{N(\epsilon)} \left(\epsilon
B^{*}_{Y_{n}} + \vartheta_{k} \right).
\]
By comparing volumes we get
\begin{eqnarray*}
\mathrm{Vol}_{n}\left(\Lambda_{n} B^{*}_{ X_{n}}\right) & = & | \, \mathrm{%
det} \, \Lambda_{n}| \mathrm{Vol}_{n}\left( B^{*}_{X_{n}}\right) \\
& \leq & \epsilon^{n} N(\epsilon) \mathrm{Vol}_{n} \left(B^{*}_{
Y_{n}}\right).
\end{eqnarray*}
If we put $N(\epsilon) = 2^{k-1}$, then from the last inequality and the
definition of entropy numbers we obtain
\begin{equation}  \label{epsilon}
\epsilon = e_{k}\left(\Lambda_{n} B^{*}_{X_{n}},B^{*}_{Y_{n}}\right) \geq
2^{-k/n} | \, \mathrm{det} \, \Lambda_{n}|^{1/n} \left( \frac{\mathrm{Vol}%
_{n} \left(B^{*}_{X_{n}}\right)}{\mathrm{Vol}_{n} \left(B^{*}_{Y_{n}}\right)}%
\right)^{1/n}.
\end{equation}
Let $B^{*}_2$ be the unit Euclidian ball in $\mathbb{R}^{n}$, $V \subset
\mathbb{R}^{n}$ be a convex symmetric body and $V^{o}$ its dual. From
Uryson's inequality \cite[P. 6]{pisier}
\[
\left(\frac{\mathrm{Vol}_{n} V}{\mathrm{Vol}_{n} B_{2}^{*}}\right)^{1/n}
\leq M_{V^{o}}
\]
it follows that
\[
\left(\mathrm{Vol}_{n} \left(B^{*}_{Y_{n}}\right)\right)^{1/n} \leq
M_{(B^{*}_{Y_{n}})^{o}}\left(\mathrm{Vol}_{n}
\left(B^{*}_{2}\right)\right)^{1/n},
\]
so that
\begin{eqnarray}  \label{bnd1}
\left( \frac{\mathrm{Vol}_{n} \left(B^{*}_{X_{n}}\right)}{\mathrm{Vol}_{n}
\left(B^{*}_{Y_{n}}\right)}\right)^{1/n} & \ge & \frac{\left(\mathrm{Vol}%
_{n} \left(B^{*}_{X_{n}}\right) \right)^{1/n}}{M_{(B^{*}_{Y_{n}})^{o}}\left(%
\mathrm{Vol}_{n} \left(B^{*}_{2} \right )\right)^{1/n}}.
\end{eqnarray}
Also, a direct calculation shows that
\begin{equation}  \label{111}
\left(\frac{\mathrm{Vol}_{n} \left(B^{*}_{X_{n}}\right)} {\mathrm{Vol}_{n}
\left(B^{*}_{2}\right)}\right)^{1/n}\,= \,\left(\int_{\mathbb{S}%
^{n-1}}\,\|\alpha\|^{-n} d\mu(\alpha)\right)^{1/n}\, \geq
M_{B^{*}_{X_{n}}}^{-1}.
\end{equation}
Combining (\ref{iii}), (\ref{bnd1}) and (\ref{111}) we have
\[
\left( \frac{\mathrm{Vol}_{n} \left(B^{*}_{X_{n}}\right)}{\mathrm{Vol}_{n}
\left(B^{*}_{Y_{n}}\right)}\right)^{1/n} \geq \frac{1}{M_{B^{*}_{X_n}}
M_{(B^{*}_{Y_{n}})^{o}}}.
\]
and substitution into (\ref{epsilon}) completes the proof.
\end{proof}

\begin{remark}
Assume that $|\lambda_{1}| \geq \cdots \geq |\lambda_{n}|$. Then $| \,
\mathrm{det} \, \Lambda_{n}|^{1/n} \geq |\lambda_{n}|$, and for $k=n$ we
have
\begin{equation}  \label{ent}
e_{n}(\Lambda U_{X}, Y) \geq \frac{|\lambda_{n}|}{ 4 M_{B^{*}_{X_{n}}}
M_{(B^{*}_{Y_{n}})^{o}}}.
\end{equation}
\end{remark}

\begin{remark}
Let $(\Omega ,\nu ,\Xi ,\kappa )\in \mathcal{K}$, $X=L_{p}$ and $Y=L_{q}$, $%
1\leq q\leq 2\leq p\leq \infty $. Then using H\"{o}lder's inequality we get
\begin{equation}
M_{(B_{L_{q}}^{\ast })^{o}}=\int_{\mathbb{S}^{n-1}}\Vert \xi \Vert
_{L_{q}}^{o}d\mu \leq \int_{\mathbb{S}^{n-1}}\Vert \xi \Vert
_{L_{q^{^{\prime }}}}d\mu =M_{B_{L_{q^{^{\prime }}}}^{n}},  \label{holder}
\end{equation}%
where $1/q+1/q^{^{\prime }}=1$. Comparing Lemma~\ref{levymean} \thinspace
\thinspace %
with (\ref{ent}) and (\ref{holder}) we find
\begin{equation}
e_{n}(\Lambda U_{L_{p}},L_{q})\gg |{\lambda _{n}}|\left\{
\begin{array}{cc}
(pq^{^{\prime }})^{-1/2}, & p<\infty ,q>1, \\
(p\log n)^{-1/2}, & p<\infty ,q=1, \\
(q^{^{\prime }}\log n)^{-1/2}, & p=\infty ,q>1, \\
(\log n)^{-1}, & p=\infty ,q=1.%
\end{array}%
\right.   \label{ent-p}
\end{equation}
\end{remark}

\begin{remark}
Remind that $\tau _{N}=\mathrm{dim}\,\mathcal{T}_{N}$. Put $n=\tau _{N}$ and
$\tilde{n}=\tau _{N+1}$ in (\ref{ent-p}). Then in the case of Sobolev's
classes ${\lambda }_{n}=\theta _{N}^{-\gamma /2}$. By (\ref{weyl}) we have $%
\tau _{N}=C\theta _{N}^{d/2}(1+\epsilon _{N})$, where $\epsilon
_{N}\rightarrow 0$ as $N\rightarrow \infty $. Let
\[
\varrho _{N}:=\left\{
\begin{array}{cc}
(p/(q-1))^{-1/2}, & p<\infty ,q>1, \\
(p\log N)^{-1/2}, & p<\infty ,q=1, \\
(\log N/(q-1))^{-1/2}, & p=\infty ,q>1, \\
(\log N)^{-1}, & p=\infty ,q=1.%
\end{array}%
\right.
\]%
From (\ref{ent-p}) it follows that
\[
e_{\tilde{n}}(W_{p}^{\gamma },L_{q})\geq C\theta _{N+1}^{-\gamma /2}\varrho
_{N}=C\left( \frac{\theta _{N+1}}{\theta _{N}}\right) ^{-\gamma /2}\theta
_{N}^{-\gamma /2}\varrho _{N}\geq C\theta _{N}^{-\gamma /2}\varrho _{N},
\]%
where the last step follows from Lemma \ref{ratios}.\thinspace \thinspace
Since the sequence of entropy numbers is not increasing, then
\[
e_{m}\geq C\theta _{N}^{-\gamma /2}\varrho _{N}=C\left( \frac{\theta _{N}}{%
n^{2/d}}\right) ^{-\gamma /2}n^{-\gamma /d}\varrho _{N}\geq Cm^{-\gamma
/d}\varrho _{N},\,\,\,\forall m\in \lbrack n,\tilde{n}],
\]%
where we have used (\ref{weyl}) in the third inequality. From the last
inequality we get the following lower bounds for the entropy of Sobolev's
classes $W_{p}^{\gamma }$ in $L_{q}$,
\begin{equation}
e_{n}(W_{p}^{\gamma },L_{q})\geq Cn^{-\gamma /d}\left\{
\begin{array}{cc}
(p/(q-1))^{-1/2}, & p<\infty ,q>1, \\
(p\log n)^{-1/2}, & p<\infty ,q=1, \\
(\log n/(q-1))^{-1/2}, & p=\infty ,q>1, \\
(\log n)^{-1}, & p=\infty ,q=1.%
\end{array}%
\right.   \label{sob-ent}
\end{equation}
\end{remark}

\begin{remark}
\label{nwidlb} From \cite[Theorem 4]{kuto1} it follows that
\[
d^{[C\theta _{N}^{d/2}]}(W_{p}^{\gamma },L_{q})\geq C\theta _{N}^{-\gamma
/2},\,\,\,1<p,q<\infty .
\]%
Let $\phi \in L_{p}$, $2\leq p\leq \infty $ and $1\leq q\leq 2$. Then $\Vert
I_{\gamma }\phi \Vert _{q}\leq \Vert I_{\gamma }\phi \Vert _{2}\leq C\Vert
\phi \Vert _{2}\leq C\Vert \phi \Vert _{p}$, i.e., $I_{\gamma }\in \mathcal{L%
}(L_{p},L_{q})$ for any $\gamma >0$. It is easy to check that $I_{\gamma
}L_{p}$ is dense in $L_{q}$ since $L_{2}=\overline{\oplus _{k=0}^{\infty
}H_{k}}^{L_{2}}$ and $L_{2}$ is dense in $L_{q}$. Also, $L_{p}$ is reflexive
if $2\leq p<\infty $. Hence, for any $N\in \mathbb{N}$ and $1<p,q<\infty $,
from the duality of Kolmogorov and Gel'fand $n$-widths given by Proposition %
\ref{simple} (c),
\[
d_{[C\theta _{N+1}^{d/2}]}(W_{p}^{\gamma },L_{q})\geq C\theta
_{N+1}^{-\gamma /2}=C_{p,q}\left( \frac{\theta _{N+1}}{\theta _{N}}\right)
^{-\gamma /2}\theta _{N}^{-\gamma /2}.
\]%
By Lemma~\ref{ratios}\thinspace \thinspace\ $\lim_{N\rightarrow \infty
}\theta _{N+1}/\theta _{N}=1$. Thus, $\theta _{N+1}/\theta _{N}\leq 2$ for
some $N_{0}\in \mathbb{N}$ and any $N\geq N_{0}$. The last estimate can be
rewritten as
\[
d_{[C\theta _{N+1}^{d/2}]}(W_{p}^{\gamma },L_{q})\geq C\theta _{N}^{-\gamma
/2}.
\]%
Since the sequence of Kolmogorov's $n$-widths $d_{n}$ is non increasing,
then
\[
d_{[Cn^{d/2}]}(W_{p}^{\gamma },L_{q})\geq Cn^{-\gamma /2}
\]%
for any $n$, $\theta _{N}\leq n\leq \theta _{N+1}$. Therefore, for any $n\in
\mathbb{N}$,
\[
d_{n}(W_{p}^{\gamma },L_{q})\geq Cn^{-\gamma /d},\,\,\,1<p,q<\infty .
\]
\end{remark}

\begin{remark}
Lower bounds for Bernstein's $n$-widths may also be obtained. Let $\Lambda
=\{\lambda _{k}\},\lambda _{k}=\theta _{k}^{-\gamma /2},\gamma >0$. Define $%
\Lambda ^{-1}=\{\lambda _{k}^{-1}\}.$ Then, $\forall z = \sum_{k=0}^M
\sum_{m=1}^{d_k} c_{k,m} Y_{k,m}$ $\in \mathcal{T}_{M}$ we have
\begin{eqnarray*}
\lefteqn{\left\Vert \Lambda ^{-1} z\right\Vert _{2}^{2}} \\
& = & \left\Vert \Lambda ^{-1}\left(
\sum_{k=1}^{M}\sum_{m=1}^{d_{k}}c_{k,m}(z)Y_{m}^{k}\right) \right\Vert
_{2}^{2}=\left\Vert \sum_{k=1}^{M}\theta _{k}^{\gamma
/2}\sum_{m=1}^{d_{k}}c_{k,m}(z)Y_{m}^{k}\right\Vert _{2}^{2} \\
& = & \sum_{k=1}^{M}\theta _{k}^{\gamma }\sum_{m=1}^{d_{k}}|c_{k,m}(z)|^{2}
\; \leq \; (\max_{1\leq k\leq M}\theta _{k}^{\gamma
})\sum_{k=1}^{M}\sum_{m=1}^{d_{k}}|c_{k,m}(z)|^{2} \\
& = & \theta _{M}^{\gamma }\left\Vert z\right\Vert _{2}^{2},
\end{eqnarray*}
so that $\left\Vert \Lambda ^{-1}z\right\Vert _{2}\leq \theta _{M}^{\gamma
/2}\left\Vert z\right\Vert _{2}$. Therefore, $\theta _{M}^{-\gamma
/2}U_{2}\cap \mathcal{T}_{M}\subset W_{2}^{\gamma }$ and
\[
b_{n}(W_{2}^{\gamma },L_{q})\geq b_{n}(\theta _{M}^{-\gamma /2}U_{2}\cap
\mathcal{T}_{M},L_{q})=\theta _{M}^{-\gamma /2}b_{n}(U_{2}\cap \mathcal{T}%
_{M},L_{q}).
\]
Set $m=\mathrm{dim} \, \mathcal{T}_{M}$. By \cite[Theorem 1]{pt} there
exists a subspace $X_{s} \subset \{ \mathbb{R}^{m}, \| \cdot \|_{q^\prime}
\} $, $1 \le q \le 2$, $1/q+1/q^{^{\prime }}=1$, $\mathrm{dim} X_{s} = s >
\lambda l$, $0 < \lambda < 1$, such that
\begin{equation}  \label{ptom}
\|\alpha\|_{2}^* \leq C M_{B^*_{X_{m}}} (1-\lambda)^{-1/2}
(\|\alpha\|^{*}_{q^\prime})^o,\,\,\,\forall \alpha \in X_{s}.
\end{equation}
Let $\lambda =1/2$. Then $\left\Vert \alpha \right\Vert _{2}^* \leq
C_{1}M_{B^*_{X_{m}}}( \left\Vert \alpha \right\Vert^{*}_{q^\prime})^o $ and
by H\"{o}lder's inequality $(\left\Vert \alpha \right\Vert^{*}_{q^\prime})^o$
$\leq \left\Vert \alpha \right\Vert _{q}^*$. Hence,
\[
\left\Vert \alpha \right\Vert _{2}^* \leq C_{1}M_{B^*_{X_{m}}} \left\Vert
\alpha \right\Vert _{q}^*, \quad 1\leq q\leq 2.
\]%
Since, by Lemma~\ref{levymean}, $M_{B^*_{X_{s}}}\leq C_{2}$, $2<q^{^{\prime
}}<\infty $, we have
\[
\left\Vert \alpha \right\Vert _{2}^* \leq C_{3} \left\Vert \alpha
\right\Vert _{q}^*\,\,\,\forall \alpha \in X_{s}.
\]
Therefore $X_{s} \cap B_{q}^* \subset C_{4}X_{s} \cap B_{2}^{*}$ and since
the spaces $\mathbb{R}^{m}$ and $J\mathbb{R}^{m}=\mathcal{T}_{M}$ are
isometrically isomorphic we get $\left\Vert z\right\Vert _{2}\leq
C_{3}\left\Vert z\right\Vert _{q},$ $\forall z\in JX_{s} \subset \mathcal{T}%
_{M},s\geq \lbrack m/2].$ Hence, denoting an arbitrary $s$-dimensional
subspace of $\mathcal{T}_{M}$ by $Y_s$,
\begin{eqnarray*}
b_{s-1}(U_{2}\cap \mathcal{T}_{M},L_{q}) & = & \sup_{Y_{s} \subset
L_{q}}\sup_{\varepsilon >0}\{\varepsilon U_{q}\cap Y_s \subset U_{2}\} \\
& \geq & \sup_{\varepsilon >0}\{\varepsilon U_{q}\cap JX_s \subset U_{2}\cap
\mathcal{T}_{M}\} \\
& \geq & \sup_{\varepsilon >0}\{\varepsilon C_{3}^{-1}U_{2}\cap
JX_{s}\subset U_{2}\cap \mathcal{T}_{M}\}\geq C_{3}^{-1}.
\end{eqnarray*}
Consequently,
\[
b_{s-1}(W_{2}^{\gamma },L_{q})\geq C_{3}^{-1}\theta _{M}^{-\gamma /2},s\geq
\lbrack m/2].
\]%
Finally, applying the same line of arguments as in Remark 3.7 we get $%
b_{n}(W_{2}^{\gamma },L_{q})\geq C_{3}^{-1} n^{-\gamma /d}\,\,\,q >1$.
\end{remark}

We now turn to estimates for the upper bounds on entropy and $n$-widths.

\begin{theorem}
\label{up-width} Let $2\leq p,q\leq \infty $ and $\gamma >d/2$. Then
\[
d_{n}(W_{p}^{\gamma },L_{q})\ll n^{-\gamma /d}\left\{
\begin{array}{ll}
q^{1/2}, & 2\leq q<\infty , \\
(\log n)^{1/2}, & q=\infty .%
\end{array}%
\right.
\]
\end{theorem}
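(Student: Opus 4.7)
Since $\nu$ is a probability measure, H\"older's inequality gives $\Vert\cdot\Vert_2\le\Vert\cdot\Vert_p$ for $p\ge 2$; hence $U_p\subset U_2$ and $W_p^\gamma\subset W_2^\gamma$. By Proposition~\ref{simple}(a), $d_n(W_p^\gamma,L_q)\le d_n(W_2^\gamma,L_q)$, so it suffices to prove the bound for $p=2$.

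For a given $n$ I pick $N$ with $\tau_N\asymp n$, which is consistent by Weyl's formula~(\ref{weyl11}) and Lemma~\ref{ratios}. The approximating subspace is taken to be $\mathcal{T}_N$ enlarged by Kashin-type corrections inside the next dyadic blocks. Using Lemma~\ref{ratios}, group the high eigenspaces into $V_j:=\oplus_{N_{j-1}<k\le N_j}H_k$ with $N_0=N$, $n_j:=\dim V_j\asymp 2^j n$, and let $\Phi_j$ be the $L_2$-projection of $\phi\in W_2^\gamma$ onto $V_j$. The Sobolev normalization $\sum_k\theta_k^\gamma\Vert\phi_k\Vert_2^2\le 1$ combined with $\theta_k\asymp(2^j n)^{2/d}$ on $V_j$ yields the $L_2$ tail bound
\[
\Vert\Phi_j\Vert_2\ll(2^j n)^{-\gamma/d}.
\]

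The crux is to avoid the worst-case Bernstein inequality $\Vert\Phi_j\Vert_q\ll n_j^{1/2-1/q}\Vert\Phi_j\Vert_2$ from Lemma~\ref{lemimbed} (which would cost a polynomial factor $n^{1/2-1/q}$) and instead install, block by block, a Kolmogorov-width bound of the form
\[
d_{m_j}\bigl(B_2\cap V_j,\,L_q\cap V_j\bigr)\ll\varrho,\qquad \varrho:=\begin{cases}q^{1/2},&2\le q<\infty,\\ (\log n)^{1/2},&q=\infty.\end{cases}
\]
This is a local Kashin--Dvoretzky-type statement for $V_j$, extracted from the Pajor--Tomczak-Jaegermann inequality~(\ref{pt1}) applied inside $V_j$ with $\lambda=1/2$ and with primal norm $\Vert\cdot\Vert^*_{L_{q'}}$; the dual L\'evy mean is then $M_{B^*_{L_q\cap V_j}}\ll\varrho$ by Lemma~\ref{levymean}, and a duality/polarization argument produces a subspace $W_j\subset V_j$ of dimension $m_j$ on which $\Vert\eta\Vert_q$ is controlled by $\varrho\Vert\eta\Vert_2$. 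Inserting the corrections $W_j$ into $L=\mathcal{T}_N\oplus\bigoplus_j W_j$ (with the $m_j$ chosen so that $\sum_j m_j=O(n)$) gives the total error
\[
\Vert\phi-\pi_L\phi\Vert_q\le\sum_{j\ge 1}\inf_{\ell\in W_j}\Vert\Phi_j-\ell\Vert_q\ll\varrho\sum_{j\ge 1}(2^j n)^{-\gamma/d}\ll\varrho\,n^{-\gamma/d},
\]
the geometric convergence in $j$ being provided by the hypothesis $\gamma>d/2$ (needed when $q=\infty$).

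The main obstacle is the dimension-budgeting of the last step: the Kashin-type bound of size $\varrho$ is achieved only when $m_j$ is a positive fraction of $n_j$, so naively $\sum_j m_j$ diverges. The true proof must use an \emph{interpolated} Kolmogorov width $d_{m_j}(B_2\cap V_j,L_q\cap V_j)$ -- ranging between the Kashin value $\varrho$ at $m_j\asymp n_j$ and the Bernstein value $n_j^{1/2-1/q}$ at $m_j=0$ -- and balance the $m_j$ against the geometric decay of $\Vert\Phi_j\Vert_2$. This delicate book-keeping, essentially built on the PTJ inequality~(\ref{pt1}) and on the L\'evy mean estimate of Lemma~\ref{levymean}, is the technical core of the upper-bound argument and the step where the precise constant $\varrho$ enters the final answer.
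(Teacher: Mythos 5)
Your roadmap matches the paper's argument closely: reduction to $p=2$ by embedding, a dyadic block decomposition of the high spectrum, block-wise Kolmogorov widths obtained from the Pajor--Tomczak-Jaegermann inequality via duality, and the L\'evy mean bound of Lemma~\ref{levymean} as the source of the factor $\varrho$. But the proof is not complete: the displayed error estimate
$\Vert\phi-\pi_L\phi\Vert_q\ll\varrho\sum_{j\ge1}(2^jn)^{-\gamma/d}$
is derived under the assumption that $d_{m_j}(B_2\cap V_j,L_q\cap V_j)\ll\varrho$ for every block, which (as you yourself observe in the final paragraph) requires $m_j$ to be a positive fraction of $n_j\asymp 2^jn$ and hence makes $\sum_j m_j$ unbounded relative to $n$. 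You then state that the ``true proof must'' interpolate and balance, but you do not carry out that balancing, which is precisely the substance of the theorem. The paper's resolution is concrete: it takes $m_k\asymp 2^{-\epsilon k}\tau_N$ (so $\sum_k m_k\le C\tau_N$), accepts the PTJ penalty $d_{m_k}\le C(l_k/m_k)^{1/2}M_{B^*_{R^q_{l_k}}}$ with $(l_k/m_k)^{1/2}\asymp 2^{k(d/\gamma+\epsilon)/2}$, and checks that this penalty is absorbed by the decay $\theta_{N_k}^{-\gamma/2}\asymp\theta_N^{-\gamma/2}2^{-k}$ exactly when $\gamma>d/2$ and $\epsilon<2-d/\gamma$. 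Without this computation the hypothesis $\gamma>d/2$ never actually enters your argument for $2\le q<\infty$ (you invoke it only ``when $q=\infty$''), which is a sign the quantitative core is missing.

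A second omission: since only finitely many blocks can receive a positive dimension budget, there is a far tail (the paper's $\sigma_2$, blocks with $k>M\asymp\epsilon^{-1}\log\tau_N$) where $m_k=0$ and one must fall back on the Nikolskii-type inequality of Lemma~\ref{lemimbed}, $U_2\cap V_j\subset Cn_j^{1/2-1/q}(U_q\cap V_j)$. Showing that this tail still contributes only $O(\theta_N^{-\gamma/2})$ requires $\gamma/d>1/2-1/q$ and a further constraint on $\epsilon$ (the paper's (\ref{epsbnd})). Your write-up does not address this regime at all. So while you have correctly identified every ingredient and the right architecture, the two quantitative verifications that make the argument close -- the choice of $m_j$ with the $(n_j/m_j)^{1/2}$ penalty, and the unapproximated tail -- are exactly the steps left undone.
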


\begin{proof}
It is sufficient to consider the case $p=2$, since the case $p\geq 2$
follows by imbedding. For a given $N\in \mathbb{N}$ let $\theta _{N}$ be the
corresponding eigenvalue of the Laplace-Beltrami operator for $\mathbb{M}%
^{d} $. Put $N_{-1}:=1$, $N_{0}:=N$ and for any $k\geq 0$ let $N_{k+1}$ be
such that $\theta _{N_{k+1}-1}\leq 2^{2/\gamma }\theta _{N_{k}}\leq \theta
_{N_{k+1}}$. This is always possible to do since the sequence of eigenvalues
forms an increasing sequence with $+\infty $ as the only accumulation point.
By Lemma~\ref{ratios}, $\lim_{k\rightarrow \infty }\theta _{N_{k+1}}/\theta
_{N_{k+1}-1}=1$. Then, a simple argument shows that,
\[
\lim_{k\rightarrow \infty }\theta _{N_{k+1}}/\theta _{N_{k}}=2^{2/\gamma }.
\]%
From here we conclude that there is a $\delta (k)$, with $\delta \rightarrow
0$ as $k\rightarrow \infty $, and constants $C_{1},C_{2}>0$ such that
\begin{equation}
C_{1}(1+\delta )^{-k}2^{2k/\gamma }\theta _{N}\leq \theta _{N_{k}}\leq
C_{2}(1+\delta )^{k}2^{2k/\gamma }\theta _{N}.  \label{thetanbnd}
\end{equation}

Let $\mathcal{T}_{N_{k},N_{k+1}}:=\oplus _{l=N_{k}}^{N_{k+1}}\Xi _{l}$, and $%
\mathrm{dim}\,\mathcal{T}_{N_{k},N_{k+1}}=l_{k}$. Using (\ref{weyl}) we get
\begin{equation}
l_{k}<\mathrm{dim}\,\mathcal{T}_{N_{k+1}}\leq C\theta _{N_{k+1}}^{d/2}.
\label{dim1}
\end{equation}%
It is easy to check that
\[
I_{\gamma }(U_{2}\cap \mathcal{T}_{N_{k},N_{k+1}})\subset \theta
_{N_{k}}^{-\gamma /2}(U_{2}\cap \mathcal{T}_{N_{k},N_{k+1}}).
\]%
Thus, by Lemma~\ref{lemimbed}\thinspace \thinspace and (\ref{dim1}),
\begin{eqnarray}
U_{2}\cap \mathcal{T}_{N_{k},N_{k+1}} &\subset &Cl_{k}^{1/2-1/q}(U_{q}\cap
\mathcal{T}_{N_{k},N_{k+1}})  \notag \\
&\subset &C\theta _{N_{k+1}}^{d(1/2-1/q)/2}(U_{q}\cap \mathcal{T}%
_{N_{k},N_{k+1}}).  \label{red}
\end{eqnarray}%
Clearly, $\Vert P\Vert _{L_{2}\rightarrow L_{2}\cap \mathcal{T}%
_{N_{k},N_{k+1}}}=1$, where $P$ is the orthogonal projection. Hence, by (\ref%
{zda}),%
\[
W_{2}^{\gamma }=I_{\gamma }U_{2}\subset \bigoplus_{k=-1}^{\infty }I_{\gamma
}(U_{2}\cap \mathcal{T}_{N_{k},N_{k+1}})
\]%
\begin{equation}
\subset \bigoplus_{k=-1}^{\infty }\theta _{N_{k}}^{-\gamma /2}(U_{2}\cap
\mathcal{T}_{N_{k},N_{k+1}}).  \label{embedding}
\end{equation}

Let $\epsilon >0$ be a fixed parameter which will be specified later, $%
M:=[\epsilon ^{-1}\log (\tau _{N})]$, $m_{0}:=\tau _{N}$, $%
m_{k}:=[2^{-\epsilon k}\tau _{N}]+1$ if $1\leq k\leq M$ and $m_{k}:=0$ if $%
k>M$. Let
\[
\mu :=\sum_{k=0}^{M}m_{k}\leq \tau _{N}+\sum_{k=1}^{M}2^{-\epsilon k}\tau
_{N}+M\leq C\tau _{N}\leq C\theta _{N}^{d/2}.
\]%
Using Proposition~\ref{simple}~(a) and (b), (\ref{red}) and (\ref{embedding}%
) we find
\begin{eqnarray}
d_{\mu }(W_{2}^{\gamma },L_{q}) &\leq &C\sum_{k=0}^{M}\theta
_{N_{k}}^{-\gamma /2}d_{m_{k}}(U_{2}\cap \mathcal{T}_{N_{k},N_{k+1}},L_{q}%
\cap \mathcal{T}_{N_{k},N_{k+1}})  \notag \\
&&\hspace{1cm}+C\sum_{k=M+1}^{\infty }\theta _{N_{k}}^{-\gamma /2}\theta
_{N_{k+1}}^{d(1/2-1/q)/2}d_{0}(U_{q},L_{q})  \notag \\
:= &&\sigma _{1}+\sigma _{2},  \label{sumdef}
\end{eqnarray}%
where using the fact that $d_{0}(U_{q},L_{q})=1$,
\[
\sigma _{2}\leq C\sum_{k=M+1}^{\infty }\theta _{N_{k}}^{-\gamma /2}\theta
_{N_{k+1}}^{d(1/2-1/q)/2}.
\]

Using (\ref{thetanbnd}),
\[
\sigma _{2}\leq C\theta _{N}^{-\gamma /2+d(1/2-1/q)/2}\sum_{k\geq C\epsilon
^{-1}\log \theta _{N}}2^{-k(2/\gamma )(\gamma /2-d(1/2-1/q)/2)}(1+\delta
)^{k}.
\]%
Since $\delta >0$, and for sufficiently large $N$ we can choose $\delta $ as
small as we please, then the last series converges if $\gamma /d>1/2-1/q$.
In this case
\begin{eqnarray*}
\sigma _{2} &\leq &C\theta _{N}^{-\gamma /2+d(1/2-1/q)/2}2^{C(\log \theta
_{N})(-\gamma /2+d(1/2-1/q)/2)/\epsilon \gamma }(1+\delta )^{C(\log \theta
_{N})/\epsilon } \\
&\leq &C\theta _{N}^{-\gamma /2+d(1/2-1/q)/2}\theta _{N}^{C(-\gamma
/2+d(1/2-1/q)/2)/\epsilon \gamma }\theta _{N}^{C\eta /\epsilon },
\end{eqnarray*}%
where $\eta :=\log (1+\delta )$. Hence, if
\begin{equation}
0<\epsilon <C\gamma ^{-1}d^{-1}(1/2-1/q)^{-1}(\gamma -d(1/2-1/q)),
\label{epsbnd}
\end{equation}%
then
\begin{equation}
\sigma _{2}\leq C\theta _{N}^{-\gamma /2}.  \label{sigma2}
\end{equation}%
To complete the proof we need to get upper bounds for $\sigma _{1}$. From (%
\ref{ptom}), there exists a subspace $L_{s}^{l}\subset R_{l}^{q}=\{\mathbb{R}%
^{l},\Vert \cdot \Vert _{q}^{\ast }\}$, $\mathrm{dim}L_{s}^{l}=s>\lambda l$,
$0<\lambda <1$, such that
\[
\Vert \alpha \Vert _{2}^{\ast }\leq CM_{B_{R_{l}^{q}}^{\ast }}(1-\lambda
)^{-1/2}(\Vert \alpha \Vert _{q}^{\ast })^{o}
\]%
for any $\alpha \in L_{s}^{l}$. Put $m:=l-s$, then
\[
\Vert z\Vert _{2}\leq C(l/m)^{1/2}M_{B_{R_{l}^{q}}^{\ast }}\Vert z\Vert
_{q}^{o}
\]%
for any $z\in JL_{s}^{l}$. By duality of Kolmogorov's and Gel'fand's $n$%
-widths, recalling the definition of $m_{k}$, and letting $%
X_{m_{k}}^{l_{k}}\subset \mathcal{T}_{N_{k},N_{k+1}}$ be an arbitrary
subspace of codimension $m_{k}$, we get
\begin{eqnarray*}
\lefteqn{d_{m_{k}}(B_{2}\cap \mathcal{T}_{N_{k},N_{k+1}},L_{q}\cap \mathcal{T%
}_{N_{k},N_{k+1}})} \\
&=&d^{m_{k}}((B_{q}\cap \mathcal{T}_{N_{k},N_{k+1}})^{o},L_{2}\cap \mathcal{T%
}_{N_{k},N_{k+1}}) \\
&=&\inf_{X_{m_{k}}^{l_{k}}\subset \mathcal{T}_{N_{k},N_{k+1}}}\,\,\,\sup_{z%
\in X_{m_{k}}^{l_{k}}\cap (B_{q}\cap \mathcal{T}_{N_{k},N_{k+1}})^{o}}\Vert
z\Vert _{2} \\
&\leq &\sup_{z\in JL_{s_{k}}^{l_{k}}\cap (B_{q}\cap \mathcal{T}%
_{N_{k},N_{k+1}})^{o}}\Vert z\Vert _{2},
\end{eqnarray*}%
where $s_{k}=l_{k}-m_{k}$, since $JL_{s_{k}}^{l_{k}}$ is a specific subspace
of codimension $m_{k}$. Thus, using Lemma~\ref{ratios} and (\ref{dim1}),
\begin{eqnarray*}
\lefteqn{d_{m_{k}}(B_{2}\cap \mathcal{T}_{N_{k},N_{k+1}},L_{q}\cap \mathcal{T%
}_{N_{k},N_{k+1}})} \\
&\leq &C\left( {\frac{l_{k}}{m_{k}}}\right) ^{1/2}M_{B_{R_{l_{k}}^{q}}^{\ast
}}\sup_{z\in JL_{s_{k}}^{l_{k}}\cap (B_{q}\cap \mathcal{T}%
_{N_{k},N_{k+1}})^{o}}\Vert z\Vert _{(B_{q}\cap \mathcal{T}%
_{N_{k},N_{k+1}})^{o}} \\
&\leq &C\left( {\frac{\tau _{N_{k+1}}}{m_{k}}}\right)
^{1/2}M_{B_{R_{l_{k}}^{q}}^{\ast }} \\
&\leq &C\left( {\frac{\theta _{N_{k+1}}^{d/2}}{m_{k}}}\right)
^{1/2}M_{B_{R_{l_{k}}^{q}}^{\ast }} \\
&\leq &C\left( {\frac{((1+\delta )^{k}2^{2k/\gamma }\theta _{N})^{d/2}}{%
2^{-\epsilon k}\tau _{N}+1}}\right) ^{1/2}M_{B_{R_{l_{k}}^{q}}^{\ast }},
\end{eqnarray*}%
from (\ref{thetanbnd}). Simplifying this last expression, it follows from
Lemma~\ref{levymean} that
\begin{eqnarray*}
\lefteqn{d_{m_{k}}(B_{2}\cap \mathcal{T}_{N_{k},N_{k+1}},L_{q}\cap \mathcal{T%
}_{N_{k},N_{k+1}})} \\
&\leq &C2^{k(d/\gamma +\epsilon )/2}(1+\delta )^{kd/4}\left\{
\begin{array}{ll}
q^{1/2}, & 2\leq q<\infty , \\
(\log l_{k})^{1/2}, & q=\infty .%
\end{array}%
\right.
\end{eqnarray*}%
Let
\[
\eta _{N}:=\left\{
\begin{array}{ll}
q^{1/2}, & 2\leq q<\infty , \\
(\log \theta _{N})^{1/2}, & q=\infty .%
\end{array}%
\right.
\]%
Then, using estimate (\ref{sumdef}) and (\ref{thetanbnd}) again, we get
\begin{eqnarray*}
\sigma _{1} &\leq &C\sum_{k=0}^{M}\theta _{N_{k}}^{-\gamma
/2}d_{m_{k}}(B_{2}\cap \mathcal{T}_{N_{k},N_{k+1}},L_{q}\cap \mathcal{T}%
_{N_{k},N_{k+1}}) \\
&\leq &C\eta _{N}\sum_{k=0}^{M}\theta _{N_{k}}^{-\gamma /2}2^{k(d/\gamma
+\epsilon )/2}(1+\delta )^{kd/4} \\
&\leq &C\eta _{N}\sum_{k=0}^{\infty }(\theta _{N}2^{2k/\gamma }(1+\delta
)^{k})^{-\gamma /2}2^{k(d/\gamma +\epsilon )/2}(1+\delta )^{kd/4} \\
&\leq &C\eta _{N}\theta _{N}^{-\gamma /2}\,\sum_{k=0}^{\infty
}2^{-k(1-d/(2\gamma )-\epsilon /2)}(1+\delta )^{-k(\gamma /2-d/4)}.
\end{eqnarray*}%
The last sum is bounded for some $\delta >0$ if $\gamma >d/2$, and $%
0<\epsilon <2-d/\gamma $. Thus we must choose $\epsilon $ less than the
aforementioned and the bound given in (\ref{epsbnd}). In this case,
\begin{equation}
\sigma _{1}\leq C\theta _{N}^{-\gamma /2}\eta _{N}.  \label{sigma111}
\end{equation}%
Finally, comparing (\ref{sigma2}) and (\ref{sigma111}) we get
\[
d_{C\theta _{N}}(W_{p}^{\gamma },L_{q})\leq C\theta _{N}^{-\gamma /d}\eta
_{N},
\]%
or
\[
d_{n}(W_{p}^{\gamma },L_{q})\leq Cn^{-\gamma /d}\left\{
\begin{array}{ll}
q^{1/2}, & 2\leq q<\infty , \\
(\log n)^{1/2}, & q=\infty .%
\end{array}%
\right.
\]
\end{proof}

\begin{remark}
\label{2 leq pq} Comparing the above theorem with Remark~\ref{nwidlb}, and
applying an embedding arguments we get
\[
d_{n}(W^{\gamma}_{p}, L_{q}) \asymp n^{-\gamma/d},\,\,\,\gamma > d/2,\,\,2
\leq p < \infty,\,\,1 < q < \infty.
\]
\end{remark}

\begin{remark}
\label{compact} From Lemma~\ref{lemimbed} and (\ref{embedding}) we see that
the set $W^{\gamma}_{2}$ is bounded in $L_{q}$, $q \geq 2$ if $\gamma >
d(1/2-1/q)$. If $\gamma > d/2$ then $\lim_{n \rightarrow \infty}
d_{n}(W^{\gamma}_{2}, L_{q}) = 0$, by Theorem \ref{up-width}. Hence, $%
W^{\gamma}_{2}$ is relatively compact in $L_{q}$ (see, e.g., \cite[p. 402]%
{lgm}), or the corresponding operator $I_{\gamma}:\,L_{2} \rightarrow L_{q}$
and its conjugate $I_{\gamma}:\,L_{q^{^{\prime }}} \rightarrow L_{2}$ are
compact if $\gamma > d/2$, $1/q+1/q^{^{\prime }}=1$.
\end{remark}

We are prepared now to prove the main result of this article.

\begin{theorem}
\label{entropy} Let $\gamma >d$. Then for any $n\in \mathbb{N}$ and $1\leq
p,q\leq \infty $,
\[
e_{n}(W_{p}^{\gamma },L_{q})\geq C_{1}n^{-\gamma /d}\left\{
\begin{array}{cc}
(p/(q-1))^{-1/2}, & p<\infty ,q>1, \\
(p\log n)^{-1/2}, & p<\infty ,q=1, \\
(\log n/(q-1))^{-1/2}, & p=\infty ,q>1, \\
(\log n)^{-1}, & p=\infty ,q=1,%
\end{array}%
\right.
\]%
and
\[
e_{n}(W_{p}^{\gamma },L_{q})\leq C_{2}n^{-\gamma /d}\left\{
\begin{array}{ll}
(q/(p-1))^{1/2}, & 2\leq q<\infty ,1<p\leq 2, \\
(q\log n)^{1/2}, & 2\leq q<\infty ,p=1, \\
(\log n/(p-1))^{1/2}, & q=\infty ,1<p\leq 2, \\
\log n, & q=\infty ,p=1,%
\end{array}%
\right.
\]%
where $C_{1},C_{2}>0$. In particular, if $1<p,q<\infty $, then
\[
e_{n}(W_{p}^{\gamma },L_{q})\asymp n^{-\gamma /d}.
\]
\end{theorem}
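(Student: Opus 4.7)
The lower bound is already in hand: inequality~(\ref{sob-ent}) proved in the Remark following Lemma~\ref{ratios} gives exactly the four cases claimed, so nothing new is needed there. I therefore concentrate on the upper bound, which I would obtain by adapting the dyadic block decomposition used for Kolmogorov widths in Theorem~\ref{up-width} to the setting of entropy numbers, substituting volume arguments and the L\'{e}vy-mean bounds of Lemma~\ref{levymean} for the width-based estimates.

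First I would fix $N$ with $\tau_N \asymp n$ and construct the index sequence $N_{-1}=1, N_0=N, N_1, \ldots$ exactly as in the proof of Theorem~\ref{up-width}, so that $\theta_{N_{k+1}}/\theta_{N_k}\to 2^{2/\gamma}$ and $l_k := \dim \mathcal{T}_{N_k,N_{k+1}} \asymp 2^{kd/\gamma}\theta_N^{d/2}$. The embedding~(\ref{embedding}) extends to general $1<p<\infty$ via Littlewood--Paley boundedness of the frequency projections:
\[
W_p^\gamma \subset C \bigoplus_{k\ge -1} \theta_{N_k}^{-\gamma/2}(U_p \cap \mathcal{T}_{N_k,N_{k+1}}),
\]
while for the endpoints $p=1,\infty$ one passes through $L_2$ using Lemma~\ref{lemimbed}, which is precisely what produces the $\log n$ corrections in the corresponding rows of the statement.

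Next I would use the subadditivity of entropy, $e_{\sum n_k}(\sum A_k, L_q) \le \sum_k e_{n_k}(A_k, L_q)$, with allocation $n_k := \lceil 2^{-\epsilon k} n \rceil$ for $0\le k\le M := [\epsilon^{-1}\log\tau_N]$ and $n_k=0$ beyond, mirroring the width proof. For each block I would invoke Pisier's volume estimate
\[
e_{n_k}(U_p \cap \mathcal{T}_{N_k,N_{k+1}}, L_q \cap \mathcal{T}_{N_k,N_{k+1}}) \ll 2^{-n_k/l_k}\left(\frac{\mathrm{Vol}_{l_k} B^*_{L_p\cap \mathcal{T}_{N_k,N_{k+1}}}}{\mathrm{Vol}_{l_k} B^*_{L_q\cap \mathcal{T}_{N_k,N_{k+1}}}}\right)^{1/l_k}
\]
together with Uryson's inequality (applied to the numerator and to the dual of the denominator), H\"{o}lder duality $(B^*_{L_q})^\circ \subset B^*_{L_{q'}}$, and Lemma~\ref{levymean}. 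These combine to bound the volume ratio by a product of two L\'{e}vy means, $M_{B^*_{L_{p'}\cap \mathcal{T}_{N_k,N_{k+1}}}} \cdot M_{B^*_{L_q\cap \mathcal{T}_{N_k,N_{k+1}}}}$, which by Lemma~\ref{levymean} is $\ll (q/(p-1))^{1/2}$ in the principal case $1<p\le 2\le q<\infty$, with the expected $(\log l_k)^{1/2}$-type substitutes at the endpoints $p=1$ or $q=\infty$.

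Finally I would sum the block contributions. Each term carries $\theta_{N_k}^{-\gamma/2}$ against a product of the L\'{e}vy-mean factor and a power of $l_k$; the resulting geometric series is summable precisely when $\gamma > d$, which is the origin of the hypothesis. Under this hypothesis the sum evaluates to $C\,\Psi(p,q,n)\,\theta_N^{-\gamma/2}\asymp n^{-\gamma/d}\Psi(p,q,n)$, with $\Psi(p,q,n)$ the expected factor in each of the four cases. The main obstacle is the block-level estimate when $n_k$ is much smaller than $l_k$: the volume factor $2^{-n_k/l_k}$ is then essentially trivial and Pisier's bound alone is insufficient. One must replace it by a Sudakov/dual-Sudakov estimate obtained from the Pajor--Tomczak-Jaegermann inequality~(\ref{pt1}), which isolates a subspace of large codimension on which the $L_q$ norm dominates the Euclidean norm and reduces the entropy to that of a Euclidean ball inside an $L_q$ ball. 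Threading the two regimes $n_k\ge l_k$ versus $n_k<l_k$ uniformly, and matching the exact factors in each of the four cases, is the delicate bookkeeping point of the argument.
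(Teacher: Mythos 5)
Your lower bound is exactly the paper's: it is read off from (\ref{sob-ent}), so that half is fine. The upper bound, however, has genuine gaps, and the paper in fact takes a much shorter route that avoids every difficulty you flag. First, your displayed ``Pisier volume estimate'' points the wrong way: the inequality $\mathrm{Vol}(A)\leq\epsilon^{n}N(\epsilon)\mathrm{Vol}(B)$ used in Theorem~\ref{lower1} yields a \emph{lower} bound for $e_{k}(A,B)$ of the form $2^{-k/n}(\mathrm{Vol}\,A/\mathrm{Vol}\,B)^{1/n}$; an upper bound by covering requires $\mathrm{Vol}(A+\tfrac{\epsilon}{2}B)$ in the numerator, and in the regime $n_{k}\ll l_{k}$ that dominates your sum (since $n_{k}/l_{k}\asymp 2^{-k(\epsilon+d/\gamma)}$) it degenerates to the trivial norm bound of Lemma~\ref{lemimbed}. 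You acknowledge this and propose a dual Sudakov substitute, but you do not carry it out; that is the heart of the argument, not a bookkeeping point. Second, the extension of (\ref{embedding}) to general $1<p<\infty$ ``via Littlewood--Paley boundedness of the frequency projections'' is unsupported: the blocks $\mathcal{T}_{N_{k},N_{k+1}}$ are spectral projections adapted to $\theta_{N_{k+1}}\approx 2^{2/\gamma}\theta_{N_{k}}$, and uniform $L_{p}$-boundedness of such projections on an arbitrary compact homogeneous manifold is neither proved nor cited in the paper; Theorem~\ref{up-width} is deliberately restricted to $p=2$ (with $p\geq 2$ by embedding) precisely to avoid this.

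The paper's actual proof closes these gaps cheaply: it takes the width bounds of Theorem~\ref{up-width} for $d_{n}(W_{2}^{\gamma},L_{q})$, converts them into entropy bounds via Carl's inequality $\sup_{1\leq l\leq n}f(l)e_{l}(A,X)\leq C\sup_{1\leq l\leq n}f(l)s_{l}(A,X)$ with $s_{l}$ either $d_{l}$ or $d^{l}$, obtains $e_{n}(W_{p}^{\gamma},L_{2})$ for $1\leq p\leq 2$ by Kolmogorov--Gel'fand duality, and then composes $I_{\gamma}=I_{\gamma/2}\circ I_{\gamma/2}$ using the multiplicativity of entropy numbers. This also corrects your account of where the hypothesis $\gamma>d$ comes from: it is not the summability of your geometric series (the block sums in the width proof already converge for $\gamma>d/2$), but the requirement that each factor $I_{\gamma/2}:L_{p}\rightarrow L_{2}$ and $I_{\gamma/2}:L_{2}\rightarrow L_{q}$ have smoothness $\gamma/2>d/2$. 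If you want to pursue your direct route, you must state and apply a genuine dual Sudakov inequality on each block and verify the resulting exponents case by case; as written the proposal does not constitute a proof of the upper bound.
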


\begin{proof}
\noindent\ From Theorem \ref{up-width}, and the duality of Kolmogorov and
Gel'fand $n$-widths, we have
\[
d^{n}(W_{q^{^{\prime }}}^{\gamma },L_{2})=d_{n}(W_{2}^{\gamma },L_{q})\ll
n^{-\gamma /d}\left\{
\begin{array}{ll}
q^{1/2}, & 2\leq q<\infty , \\
(\log n)^{1/2}, & q=\infty ,%
\end{array}%
\right.
\]%
where $1/q+1/q^{^{\prime }}=1$. Let $\{s_{n}\}$ denotes either of the
sequences $\{d_{n}\}$ or $\{d^{n}\}$. Assume that $f(l)$, $f:\mathbb{N}%
\rightarrow \mathbb{R}$ is a positive and increasing (for large $l\in
\mathbb{N}$) function such that $f(2^{j})\leq Cf(2^{j-1})$ for some fixed $C$
and any $j\in \mathbb{N}$. Then, there is a constant $C>0$ such that for all
$n\in \mathbb{N}$ we have
\[
\sup_{1\leq l\leq n}f(l)e_{l}(A,X)\leq C\sup_{1\leq l\leq
n}f(l)s_{l}(A,X),\,\,\,n\in \mathbb{N}
\]%
(see e.g. \cite{carl,et0,et1}). In particular, let $A=W_{2}^{\gamma }$, $%
X=L_{q}$,
\[
f^{\ast }(l):=l^{\gamma /d}\left\{
\begin{array}{ll}
q^{-1/2}, & 2\leq q<\infty , \\
(\log l)^{-1/2}, & q=\infty ,%
\end{array}%
\right.
\]%
then $f^{\ast }(2^{j})\leq Cf^{\ast }(2^{j-1})$ for some $C>0$ and
\[
f^{\ast }(n)e_{n}(W_{2}^{\gamma },L_{q})\leq \sup_{1\leq l\leq n}f^{\ast
}(l)d_{l}(W_{2}^{\gamma },L_{q})\leq C.
\]%
or
\begin{equation}
e_{n}(W_{2}^{\gamma },L_{q})\leq Cn^{-\gamma /d}\left\{
\begin{array}{ll}
q^{1/2}, & 2\leq q<\infty , \\
(\log n)^{1/2}, & q=\infty ,%
\end{array}%
\right.  \label{2q}
\end{equation}%
where $\gamma >d/2$ by the Theorem~\ref{up-width}. Similarly, if $\gamma
>d/2 $, then
\begin{equation}
e_{n}(W_{p}^{\gamma },L_{2})\leq Cn^{-\gamma /d}\left\{
\begin{array}{ll}
(p-1)^{-1/2}, & 1<p\leq 2, \\
(\log n)^{1/2}, & p=1.%
\end{array}%
\right.  \label{q'2}
\end{equation}%
Applying the multiplicative property of entropy numbers (see, e.g., \cite%
{pietsch}), (\ref{2q}) and (\ref{q'2}) we get,
\[
e_{n}(W_{p}^{\gamma },L_{q})=e_{n}(I_{\gamma }:L_{p}\rightarrow L_{q})
\]%
\[
=e_{n}(I_{\gamma /2}:L_{p}\rightarrow L_{2})e_{n}(I_{\gamma
/2}:L_{2}\rightarrow L_{q})
\]%
\begin{equation}
\leq Cn^{-\gamma /d}\left\{
\begin{array}{ll}
(q/(p-1))^{1/2}, & 2\leq q<\infty ,1<p\leq 2, \\
(q\log n)^{1/2}, & 2\leq q<\infty ,p=1, \\
(\log n/(p-1))^{1/2}, & q=\infty ,1<p\leq 2, \\
\log n, & q=\infty ,p=1,%
\end{array}%
\right.  \label{up}
\end{equation}%
where $\gamma /2>d/2$ or $\gamma >d$. Finally comparing (\ref{sob-ent}) and (%
\ref{up}) we get the proof.
\end{proof}

\end{document}